\renewcommand{\d}{\mathrm{d}}
\newtheorem{definition}{Definition}[]
\newtheorem{theorem}{Theorem}[]
\newtheorem{remark}{Remark}[]
\newtheorem{corollary}{Corollary}[]
\newtheorem{proposition}{Proposition}[]
\newcommand{\R}{\ensuremath{\mathbb{R}}}
\begin{document}
	
	\title{Mean-Field and Kinetic Descriptions of Neural Differential Equations}
	\author{Michael Herty\thanks{Institut f\"{u}r Geometrie und Praktische Mathematik (IGPM) -- RWTH Aachen University -- Templergraben 55, 52062 Aachen (Germany) -- \texttt{herty@igpm.rwth-aachen.de}} ,\ Torsten Trimborn\thanks{NRW.BANK -- Kavalleriestra\ss e 22, 40213 D\"{u}sseldorf (Germany) -- \texttt{torsten.trimborn@nrwbank.de}} ,\ Giuseppe Visconti\footnotemark[1]~\thanks{Currently at: Department of Mathematics ``G.~Castelnuovo'' -- Sapienza University of Rome -- P.le Aldo Moro 5, 00185 Roma (Italy) -- \texttt{giuseppe.visconti@uniroma1.it}}
	}
	\maketitle


\begin{abstract} 
Nowadays, neural networks are widely used in many applications as artificial intelligence models for learning tasks. Since typically neural networks process a very large amount of data, it is convenient to formulate them within the mean-field and kinetic theory. In this work we focus on a particular class of neural networks, i.e.~the residual neural networks, assuming that each layer is characterized by the same number of neurons $N$, which is fixed by the dimension of the data. This assumption allows to interpret the residual neural network as a time-discretized ordinary differential equation, in analogy with neural differential equations. The mean-field description is then obtained in the limit of infinitely many input data. This leads to a Vlasov-type partial differential equation which describes the evolution of the distribution of the input data. We analyze steady states and sensitivity with respect to the parameters of the network, namely the weights and the bias. In the simple setting of a linear activation function and one-dimensional input data, the study of the moments provides insights on the choice of the parameters of the network. Furthermore, a modification of the microscopic dynamics, inspired by stochastic residual neural networks, leads to a Fokker-Planck formulation of the network, in which the concept of network training is replaced by the task of fitting distributions. The performed analysis is validated by artificial numerical simulations. In particular, results on classification and regression problems are presented.
\end{abstract}

\paragraph{Mathematics Subject Classification (2020)} 35Q83 (Vlasov equations), 35Q20 (Boltzmann equation), 35Q84 (Fokker-Planck equation), 90C31 (Sensitivity, stability, parametric optimization), 92B20 (Neural networks, artificial life and related topics)

\paragraph{Keywords} Residual neural network, continuous limit, mean-field equation, kinetic equation, machine learning

\section{Introduction}
The use of machine learning algorithms has gained a lot of interest in the past decade~\cite{jordan2015machine, joshi2019machine, MuellerVincentBostrom2016, Wooldridge2020}. Besides data science problems such as clustering, regression, image recognition or pattern formation, there are novel applications in the field of engineering as e.g.~for production processes~\cite{onar2018changing, schmitt2018advances, tercan2017improving, Bobzin2021}.
In this study we focus on deep residual neural networks (ResNets) which date back to the 1970s and have been heavily influenced by the pioneering work of Werbos~\cite{werbos1994roots}. ResNets have been successfully applied to a variety of applications such as image recognition~\cite{wu2019wider}, robotics~\cite{zeng2018robotic} or classification tasks~\cite{fawaz2018data}. More recently, also applications to mathematical problems in numerical analysis~\cite{Mishra2019,HesthavenRay2018,HesthavenRay2019,WangHesthavenRay,ZhangGuoKarniadakis2020} and optimal control~\cite{Osher2019} have been considered. These contributions are usually aimed to use machine learning in a collaborative fashion.

The process of a ResNet can be shortly summarized as follows. Given input data, which are also referred to as measurements, the ResNet propagates those through layers and neurons to a final state using a precise dynamics which depends on some parameters, the weights and the bias. This final state is usually called output or prediction of the network, which hopefully solves a given learning task. The success of the ResNet depends on the choice of the parameters which have to be determined in an optimal fashion, namely by solving an optimization procedure, called training. This procedure is performed on a reference data set, characterized by a given target data for any input measurement. Then, the training aims to compute optimal weights and bias by minimizing a suitable distance between the predictions of the network and the given targets.
For the training of ResNets back-propagation algorithms based on the stochastic gradient descent method~\cite{werbos1994roots}, or the ensemble Kalman filter~\cite{Kovachki2019,Watanabe1990,Alper}, or other approaches~\cite{Ha20202417} are frequently used.

Since neural networks are typically processed on a very large amount of data, the purpose of this work is to formulate a particular class of ResNets within the mean-field and the kinetic theory in order to provide a synthetic and fast interpretation of the learning process. In fact, in this way, the dynamics of the ResNet are reinterpreted in terms of partial differential equations (PDEs) describing the evolution of the distribution of the data. In other words, we look at the data as a whole rather than considering them as single entities. There have been made several attempts to describe neural networks by differential equations~\cite{chen2018neural,lu2017beyond,ruthotto2018deep}. More precisely, in~\cite{ruthotto2018deep} the connection between deep convolution neural networks and PDEs is studied. In~\cite{chen2018neural} different classes of neural networks are interpreted as different time discretization schemes of ordinary differential equations (ODEs). These studies lead to the formulation and the investigation of the so-called neural differential equations. There are also studies on application of mean-field and kinetic theory to ResNets, e.g.~see~\cite{araujo2019mean,mei2018mean,sirignano2019mean}. For instance, in~\cite{sirignano2019mean} the authors consider the mean-field formulation of a single hidden layer ResNet in the limit of infinitely many neurons.

We consider a particular structure of a ResNet where all the layers are characterized by the same number of neurons $N$, which is fixed by the dimension of the input measurements. This assumption underlies also the derivation of neural differential equations~\cite{chen2018neural}. Throughout the paper, we name ResNets with this structure as simplified residual neural networks (SimResNets). Networks with a \emph{similar} type of structure have been proved to satisfy the universal approximation theorem for different class of functions~\cite{UniversalApproximator,kidger2020universal} and also for probability distributions~\cite{LuLu2020}. Moreover, SimResNets have been successfully tested on engineering applications~\cite{GebhardtTrimborn,Bobzin2021}.

In this work, in contrast to~\cite{sirignano2019mean}, we do not consider the limit of infinitely many neurons. Instead, using as starting point the differential formulation of the SimResNet, we compute the mean-field limit in the number of input data which lead to a hyperbolic Vlasov-type PDE for the evolution of the distribution of the measurements. In this microscopic to kinetic limit process we do not solve any optimization problem to determine the parameters of the network.
Insights into the choice of the parameters are recovered by analyzing the forward propagation of the mean-field formulation of the SimResNet. In particular, we analyze steady states and study the properties of the moment model in the simple setting of a linear activation function and one-dimensional input data, deriving conditions on the parameters to solve certain aggregation or clustering phenomena.
Furthermore, we perform a sensitivity analysis in order to obtain a novel update algorithm for the parameters when a perturbation either in the input or in the target distribution is considered. This leads to an easier and computationally cheaper algorithm in comparison to standard back-propagation algorithms, and therefore can be of utmost advantage in real world applications. The robustness, in terms of perturbations in the input data, of the SimResNet has been recently studied in~\cite{GersterTrimbornVisconti} using spectral methods.

In addition, we propose a Boltzmann-type formulation of a reformulation of the SimResNet which includes noisy dynamics. This modification is motivated by stochastic neural networks with stochastic output layers which capture uncertainty over activations, see e.g.~\cite{Goldberger2017TrainingDN,NIPS2017_7096,Tran2019BayesianLA,DBLP:conf/icip/YouYLX019}. Long time behavior of such Boltzmann-type equations can be conveniently studied in the grazing limit regime which naturally leads to a Fokker-Planck equation where it is possible to obtain non trivial steady state distributions~\cite{PareschiToscani2006,PareschiToscaniBOOK,Toscani2006}.
Steady states obtained in the Fokker-Planck asymptotic depend on the parameters of the network which have to be chosen in order to fit the target distribution. In other words, we replace the concept of training with the task of fitting a distribution by imposing conditions on the parameters of the network and on the choice of the activation function.

The outline of the paper is as follows. In Section 2 we briefly review the microscopic ResNet model and we introduce the simplifications leading to the SimResNet. Then, the time continuous limit of the SimResNet is formally computing, leading to a neural differential equation. We derive the corresponding mean-field limit and we investigate steady states, moment properties and perform sensitivity analysis. In Section 3 the Boltzmann-type neural network model is presented, together with an asymptotic limit leading to a Fokker-Planck equation. We analyze the ability of the Fokker-Planck equation to describe given target distributions depending on the choice of parameters and activation functions. In Section 4 we conduct several artificial numerical tests which validate the previous analysis. Especially, we take into account the solution of two classical machine learning problems, classification and regression. We conclude the paper in Section 5 with a brief summary and an outlook on future research perspectives.

\section{Mean-Field Formulation of Residual Neural Networks with a Simplified Structure}

Let us consider input signals characterized by $d$ features. A feature is one type of measured data, e.g.~the temperature of a tool, the size of a vehicle, the color intensity of the pixels of an image. 
Without loss of generality we assume that the value of each feature is one-dimensional and thus the input signals are given by $\boldsymbol{x}_i^0\in\R^d$, $i=1,\dots,M$. Here, $M$ denotes the number of measurements or input signals.
We consider $L$ hidden layers, labeled by $\ell=1,\dots,L$, and in each layer the number of neurons is given by $N_\ell$, $\forall \ell=1,\dots,L$. We denote with $\ell=0$ and $\ell=L+1$ the input layer and the output layer, respectively. In particular, $N_0=d$ and $\boldsymbol{x}_i(\ell)\in\R^{N_\ell}$ is the propagation of the $i$-th input signal through the network at the $\ell$-th layer.
	
Given an input signal $\boldsymbol{x}_i^0\in\R^d$, a ResNet defines precise microscopic dynamics for the propagation of the activation energy of each neuron through the layers as~\cite{He2015DeepRL}
\begin{equation} \label{eq:defNN}
\begin{cases}
\boldsymbol{x}_i(\ell+1)=\boldsymbol{x}_i(\ell)+\Delta t\ \sigma\left( \boldsymbol{w}(\ell)\ \boldsymbol{x}_i(\ell)+\boldsymbol{b}(\ell) \right), \quad \ell=0,\dots,L\\
\boldsymbol{x}_i(0)=\boldsymbol{x}_i^0
\end{cases}
\end{equation}
for each $i=1,\dots,M$.
Here, $\boldsymbol{w}(\ell) \in\R^{N_{\ell+1}\times N_\ell}$ are the weights and $\boldsymbol{b}(\ell)\in\R^{N_{\ell+1}}$ the bias. These define the parameters of the network to be optimized. Instead, $\sigma: \R\to \R$ denotes the activation function which is applied component wise. Examples of activation functions are the identity function $\sigma_I(x) = x$, the so-called ReLU function $\sigma_R(x) = \max\{0,x\}$, the sigmoid function $\sigma_S(x) = \frac{1}{1+\exp(-x)}$, the hyperbolic tangent function $\sigma_T(x) = \tanh(x)$, and the growing cosine unit function $\sigma_{\text{GCU}}(x)=x\cos(x)$. For the time being, $\Delta t\in\R^+$ is a multiplicative factor.

A crucial part in applying a neural network is the training procedure. By training one aims to minimize the distance between the output $\boldsymbol{x}_i(L+1)$ of the neural network at the final layer and a given target for the signal $i$, here denoted with $\boldsymbol{h}_i$. Mathematically speaking one aims to solve the minimization problem 
$$
	\min_{\boldsymbol{W},\boldsymbol{B}} \sum_{i=1}^M D(\boldsymbol{x}_i(L+1),\boldsymbol{h}_i)
$$
where $D$ is a given loss function, and where $\boldsymbol{W}$ and $\boldsymbol{B}$ are the collection of the weights and bias, respectively. Several choices for the loss function are possible~\cite{LossFunctions}. 
The procedure can be computationally expensive on the given training set. Most famous examples of such an optimization procedure are back-propagation algorithms based on the stochastic gradient descent~\cite{haber2018look} or the ensemble Kalman filter~\cite{Kovachki2019,Watanabe1990,Alper}.
In the following we tackle the problem of the training of the network from a different perspective, which does not involve the solution of such an optimization procedure.

\subsection{Simplified Residual Neural Networks and Time Continuous Limit} \label{ssec:simresnet}

In~\eqref{eq:defNN} the layers define a discrete structure within the ResNet. We interpret the layers as discrete times where the propagation of the input signal through the dynamics of the network is evaluated. To this end the parameter $\Delta t$ is seen as the time step of the time discretization.

In order to formally compute the time continuous limit of~\eqref{eq:defNN} we need to introduce the following assumption. We assume that the number of neurons is identical in each layer and is defined by the size of the input signal, i.e.~we take $N_\ell=N=d$, $\forall\,\ell=1,\dots,L+1$. We will refer to ResNet with this structure as Simplified Residual Neural Network (SimResNet). We recall that a similar assumption underlies the derivation of neural differential equations, see e.g.~\cite{ruthotto2018deep,chen2018neural}. In this way, the ResNet~\eqref{eq:defNN} can be seen as an explicit Euler discretization of an underlying differential equation. Namely, in the limit $\Delta t\to 0^+$ and $L\to \infty$, \eqref{eq:defNN} formally leads to
\begin{equation} \label{eq:simplifiedNN}
\begin{cases}
\displaystyle{\frac{\mathrm{d}}{\mathrm{d}t}} \boldsymbol{x}_i(t) = \sigma\left( \boldsymbol{w}(t)\ \boldsymbol{x}_i(t) + \boldsymbol{b}(t) \right), \quad t>0\\
\boldsymbol{x}_i(0) = \boldsymbol{x}_i^0,
\end{cases}
\end{equation}
for each $i=1,\dots,M$, and where now $\boldsymbol{w}(t)\in\R^{d\times d}$, $\boldsymbol{b}(t)\in\R^d$. We will refer to~\eqref{eq:simplifiedNN} as the continuous SimResNet or the neural differential equation associated to the SimResNet. Existence and uniqueness of a solution to~\eqref{eq:simplifiedNN} is guaranteed as long as the activation function $\sigma$ satisfies the Lipschitz condition.

The simplification leading to a SimResNet, which allows to write~\eqref{eq:simplifiedNN}, is not only beneficial for the derivation of the time continuous limit and of the mean-field equation in the next section. In fact, networks with a similar structure have been already considered, e.g.~in~\cite{UniversalApproximator,LuLu2020,kidger2020universal}, showing that universal approximation theorems for different classes of functions and for distributions hold true. In addition, the performance of the SimResNet has been successfully investigated for engineering applications~\cite{GebhardtTrimborn,Bobzin2021}.

\subsection{Mean-Field Limit}

In this section we derive the PDE description of the forward propagation of the time continuous formulation of the SimResNet. To this end, we follow a Liouville-type approach by computing the corresponding mean-field limit of~\eqref{eq:simplifiedNN} in the number of measurements $M$.
In this regime the dynamic of~\eqref{eq:simplifiedNN} can be described by a hyperbolic Vlasov-type PDE:
\begin{equation} \label{eq:meanfield}
\partial_t g(t,\boldsymbol{x}) + \nabla_{\boldsymbol{x}} \cdot \Big( \sigma\big( \boldsymbol{w}(t)   \boldsymbol{x} + \boldsymbol{b}(t) \big) g(t,\boldsymbol{x}) \Big)=0.
\end{equation}
Equation~\eqref{eq:meanfield} describes the evolution of the compactly supported probability distribution function $g:\R^+_0\times\R^d\to\R^+_0$
with normalized initial condition
$$g(0,\boldsymbol{x})= g_0(\boldsymbol{x}), \quad \int\limits_{\R^d} g_0(\boldsymbol{x})\mathrm{d}\boldsymbol{x}=1.$$ 
The probability distribution $g_0(\boldsymbol{x})$ is assumed to be known since it corresponds to the distribution of the measured input data. Observe that~\eqref{eq:meanfield} preserves the mass, i.e.~$\int\limits_{\R^d} g(t,\boldsymbol{x}) \mathrm{d}\boldsymbol{x}=1$, $\forall\,t>0$.

The derivation of~\eqref{eq:meanfield} from~\eqref{eq:simplifiedNN} follows classical arguments and is shortly summarized hereinafter.
We define the empirical measure of the solution vector $\boldsymbol{X}=(\boldsymbol{x}_1,..., \boldsymbol{x}_M)^T\in \R^{dM}$ as
$$
	\mu^M_{\boldsymbol{X}}(t,\boldsymbol{x}) = \frac1M \sum\limits_{k=1}^M \delta(\boldsymbol{x}-\boldsymbol{x}_k(t))
$$
and use it to connect the solution of the large system of differential equations~\eqref{eq:simplifiedNN} to the PDE. A straightforward computation shows that
$\mu^M_{\boldsymbol{X}}(t,\boldsymbol{x})$ is a weak solution of the equation~\eqref{eq:meanfield}. We refer e.g.~to~\cite{CarrilloFornasierToscaniVecil2010,Golse,jabin2014review,DegondMotsch2008} for further details. Using rigorous arguments, based on the Dobrushin's inequality, it is possible to show that the empirical measure converges to the solution of the mean-field equation~\eqref{eq:meanfield}. Here, we follow the presentation in~\cite{Golse} where the convergence is obtained in the space of probability measures $\mathcal{P}(\R^d)$ using the $1$-Wasserstein distance, which is defined as follows.
	\begin{definition}
		Let $\mu$ and $\nu$ two probability measures on $\R^d$. Then the 1-Wasserstein distance is defined by
		\begin{align}
		W(\mu,\nu):= \inf\limits_{\pi\in \mathcal{P}^*(\mu,\nu)} \int\limits_{\R^d} \int\limits_{\R^d} |\xi-\eta| \mathrm{d}\pi(\xi,\eta),
		\end{align}
		where $\mathcal{P}^*$ is the space of probability measures on $\R^d\times\R^d$ such that the marginals are $\mu$ and $\nu$ i.e.
		$$
		\int\limits_{\R^d} \mathrm{d}\pi(\cdot, \eta)= \mathrm{d}\mu(\cdot),\quad \int\limits_{\R^d} \mathrm{d}\pi(\xi, \cdot)= \mathrm{d}\nu(\cdot).
		$$
	\end{definition}
	\begin{theorem}
		Assume that the activation function $\sigma$ of the  microscopic system \eqref{eq:simplifiedNN} is Lipschitz continuous with Lipschitz constant $C_L>0$.
		Let $g_0(\boldsymbol{x})$, initial condition of~\eqref{eq:meanfield}, be a probability measure with finite first moment such that 
		$$
		W(\mu^M_0, g_0)\to 0,\ \text{as}\  M\to\infty.
		$$
		Then the Dobrushin's stability estimate 
		$$
			W(\mu^M(t),g(t)) \leq \exp(C_Lt) W(\mu^M_0, g_0)
		$$
			is satisfied and thus
		$$
			W(\mu^M(t),g(t))\to 0,\ \text{as}\  M\to\infty.
		$$
	\end{theorem}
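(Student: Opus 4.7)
The plan is to prove the Dobrushin stability estimate by a standard characteristics-plus-coupling argument and then deduce the convergence statement as a direct corollary. Let $\Phi_t:\R^d\to\R^d$ denote the flow of the non-autonomous ODE
\begin{equation*}
\frac{\d}{\d t}\Phi_t(\boldsymbol{x}) = \sigma\bigl(\boldsymbol{w}(t)\,\Phi_t(\boldsymbol{x})+\boldsymbol{b}(t)\bigr),\qquad \Phi_0(\boldsymbol{x})=\boldsymbol{x},
\end{equation*}
which is the characteristic equation of~\eqref{eq:meanfield} and coincides with~\eqref{eq:simplifiedNN} for the initial datum $\boldsymbol{x}$. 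The existence and uniqueness of $\Phi_t$ on any finite time interval follow from the Lipschitz continuity of $\sigma$ and the continuity of $\boldsymbol{w}(\cdot)$ and $\boldsymbol{b}(\cdot)$. The key identifications I would rely on are (i) $g(t)=\Phi_t\#g_0$, by the method of characteristics applied to the continuity equation~\eqref{eq:meanfield}, and (ii) $\mu^M(t)=\Phi_t\#\mu^M_0$, since each Dirac mass at $\boldsymbol{x}_k^0$ is simply transported along the characteristic that coincides with the particle trajectory of~\eqref{eq:simplifiedNN}.

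The next step is a Grönwall estimate on the flow: for any two points $\xi,\eta\in\R^d$ the difference $\Phi_t(\xi)-\Phi_t(\eta)$ satisfies
\begin{equation*}
\frac{\d}{\d t}\bigl|\Phi_t(\xi)-\Phi_t(\eta)\bigr| \leq C_L\,\bigl|\Phi_t(\xi)-\Phi_t(\eta)\bigr|,
\end{equation*}
using the Lipschitz bound on the right-hand side (understood to absorb the operator norm of $\boldsymbol{w}(t)$ into the constant $C_L$, as assumed). Grönwall's inequality then yields the pointwise Lipschitz estimate
\begin{equation*}
\bigl|\Phi_t(\xi)-\Phi_t(\eta)\bigr| \leq \exp(C_L t)\,|\xi-\eta|.
\end{equation*}

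To transfer this pointwise estimate to the Wasserstein distance, I would use a coupling argument. Fix any admissible coupling $\pi_0\in\mathcal{P}^*(\mu^M_0,g_0)$ and define $\pi_t:=(\Phi_t,\Phi_t)\#\pi_0$, which by the pushforward identifications above is an admissible coupling of $\mu^M(t)$ and $g(t)$. Then
\begin{equation*}
W\bigl(\mu^M(t),g(t)\bigr) \leq \int_{\R^d}\!\!\int_{\R^d} |\xi-\eta|\,\d\pi_t(\xi,\eta) = \int_{\R^d}\!\!\int_{\R^d}\bigl|\Phi_t(\xi)-\Phi_t(\eta)\bigr|\,\d\pi_0(\xi,\eta),
\end{equation*}
and applying the Grönwall bound under the integral sign and taking the infimum over $\pi_0$ gives the stated estimate $W(\mu^M(t),g(t))\leq \exp(C_L t)\,W(\mu^M_0,g_0)$. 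The convergence $W(\mu^M(t),g(t))\to 0$ as $M\to\infty$ is then immediate from the hypothesis on the initial data, since $\exp(C_L t)$ is finite for any fixed $t$.

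The main obstacle is the first step, i.e.\ rigorously justifying that both the empirical measure and the mean-field solution are given as pushforwards of their initial data by the \emph{same} flow $\Phi_t$; this relies on the uniqueness of weak (measure-valued) solutions of the linear continuity equation~\eqref{eq:meanfield} with a Lipschitz velocity field, which is classical but requires the velocity $\sigma(\boldsymbol{w}(t)\boldsymbol{x}+\boldsymbol{b}(t))$ to be sufficiently regular in $\boldsymbol{x}$ uniformly on compact time intervals. The Grönwall step and the coupling step are then routine once this representation is in hand.
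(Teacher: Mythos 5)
Your proof is correct. Note first that the paper itself gives no proof of this theorem: it simply defers to the reference on mean-field limits (Golse), where the Dobrushin stability estimate is established for the genuinely nonlinear Vlasov setting in which the velocity field depends on the measure itself, and where the argument therefore requires a more delicate coupled Gr\"onwall estimate. Your observation that equation~\eqref{eq:meanfield} is a \emph{linear} continuity equation --- the field $\sigma(\boldsymbol{w}(t)\boldsymbol{x}+\boldsymbol{b}(t))$ does not depend on $g$ --- lets you bypass that machinery entirely: both $\mu^M(t)$ and $g(t)$ are pushforwards of their initial data under the \emph{same} flow $\Phi_t$, so the single coupling $(\Phi_t,\Phi_t)\#\pi_0$ together with the pointwise Gr\"onwall bound on the flow immediately yields the estimate. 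This is a self-contained and simpler route than the cited general argument, and it is the natural one for this theorem. Two caveats, both of which you already flag and both of which are really imprecisions in the paper's statement rather than in your proof: the Gr\"onwall constant should properly be $C_L\sup_t\|\boldsymbol{w}(t)\|$ rather than the Lipschitz constant of $\sigma$ alone (so one must read $C_L$ as absorbing the operator norm of the weights, or assume $\|\boldsymbol{w}(t)\|\le 1$), and the identification $g(t)=\Phi_t\#g_0$ rests on uniqueness of measure-valued solutions of the linear continuity equation with a velocity field that is Lipschitz in $\boldsymbol{x}$ uniformly on compact time intervals, which in turn requires some regularity of $\boldsymbol{w}(\cdot)$ and $\boldsymbol{b}(\cdot)$ in time that the paper leaves implicit.
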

For the detailed proof of the above result we refer to~\cite{Golse}. In view of this result, we stress the fact that the mean-field interpretation of the SimResNet is accurate when the number of measurements is very large. Moreover, equation~\eqref{eq:meanfield} provides only a statistical information on the neural network propagation, and therefore any information on the network output of a precised measurement is lost. In other words, in this limit we provide a statistical interpretation of the learning task.

In the following we discuss solution properties of~\eqref{eq:meanfield}. If the flux function $F(t,\boldsymbol{x},u):=  u \, \sigma\left(  \boldsymbol{w}(t)   \boldsymbol{x} + \boldsymbol{b}(t) \right) $ fulfills
\begin{equation}\label{RegNN}\begin{aligned}F\in C^2(\R_+\times\R^d\times \R; \R) \\ \partial_u F\in L^{\infty}(\R_+\times\R^d\times \R; \R) \\ \partial_u \mathrm{div}_x(F)\in L^{\infty}(\R_+\times\R^d\times \R; \R)
\end{aligned}\end{equation}
then the hyperbolic conservation law~\eqref{eq:meanfield} admits a unique weak entropy solution in the sense of Kr\^uzkov for any initial data $g_0\in L^{\infty}\cap L^1$, see~\cite{colombo2008stability}, and can be solved pointwise by the method of characteristics. In particular, the following result characterizes the steady state solution of~\eqref{eq:meanfield}.

\begin{proposition}\label{Prop::Steady}
Let $g:\R^+_0\times\R^d\to\R^+_0$ be the compactly supported weak solution of the mean-field equation~\eqref{eq:meanfield}. Assume that the activation function $\sigma:\R\to\R$ have $n$ zeros $z_i$, i.e.~$\sigma(z_i)=0$ for $i=1,\dots,n$. Let $\boldsymbol{b}^{\infty}= \lim_{t\to\infty} \boldsymbol{b}(t)$ and $\boldsymbol{w}^{\infty}= \lim_{t\to\infty} \boldsymbol{w}(t)$ exist and be finite. Moreover, assume that $\boldsymbol{w}^\infty$ has maximum rank. Then
\begin{equation} \label{eq:weakSS}
g^{\infty}(\boldsymbol{x}) = \sum_{i=1}^{n^d} \rho_i \delta(\boldsymbol{x}-\boldsymbol{y}_i)
\end{equation}
is a steady state solution of~\eqref{eq:meanfield} in the sense of distributions provided that $\boldsymbol{y}$ solves $\boldsymbol{w}^\infty \boldsymbol{y}  + \boldsymbol{b}^\infty=\boldsymbol{0}$, with $\rho_i\in[0,1]$, $\forall\,i=1,\dots,n$, and $\sum_{i=1}^{n^d} \rho_i=1$.
\end{proposition}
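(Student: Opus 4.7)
The plan is to recognize that, under the stated convergence hypotheses, a steady state of~\eqref{eq:meanfield} is a distribution $g^\infty$ that annihilates the divergence operator with the limiting flux, i.e.~one that satisfies $\nabla_{\boldsymbol{x}} \cdot \bigl( \sigma(\boldsymbol{w}^\infty \boldsymbol{x} + \boldsymbol{b}^\infty)\, g^\infty(\boldsymbol{x}) \bigr) = 0$ in $\mathcal{D}'(\R^d)$. The task then reduces to verifying this identity for the proposed sum of Dirac masses, which is purely an exercise in distributional pairings.

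To carry this out I would test the stationary equation against an arbitrary $\varphi\in C_c^\infty(\R^d)$. After the formal integration by parts built into the distributional formulation, the left-hand side becomes $-\int_{\R^d} \sigma(\boldsymbol{w}^\infty \boldsymbol{x} + \boldsymbol{b}^\infty)\, g^\infty(\boldsymbol{x}) \cdot \nabla\varphi(\boldsymbol{x})\,\mathrm{d}\boldsymbol{x}$. Substituting the ansatz $g^\infty = \sum_{i=1}^{n^d} \rho_i \delta(\boldsymbol{x}-\boldsymbol{y}_i)$ and using the sifting property of the Dirac mass collapses this to the finite sum $-\sum_i \rho_i\, \sigma(\boldsymbol{w}^\infty \boldsymbol{y}_i + \boldsymbol{b}^\infty)\cdot\nabla\varphi(\boldsymbol{y}_i)$. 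Because $\nabla\varphi(\boldsymbol{y}_i)$ can be prescribed freely and independently at the distinct points $\boldsymbol{y}_i$, the sum vanishes for every admissible $\varphi$ if and only if $\sigma(\boldsymbol{w}^\infty \boldsymbol{y}_i + \boldsymbol{b}^\infty) = \boldsymbol{0}$ componentwise for each index $i$.

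It remains to exhibit the $n^d$ admissible locations. Since $\sigma$ acts entrywise, the condition $\sigma(\boldsymbol{w}^\infty \boldsymbol{y} + \boldsymbol{b}^\infty)=\boldsymbol{0}$ is equivalent to demanding that every component of $\boldsymbol{w}^\infty \boldsymbol{y} + \boldsymbol{b}^\infty$ belong to the finite root set $\{z_1,\dots,z_n\}$; varying the choice of root in each of the $d$ coordinates produces exactly $n^d$ target vectors $\boldsymbol{z}\in\{z_1,\dots,z_n\}^d$. For each such $\boldsymbol{z}$, the maximum-rank hypothesis on the square matrix $\boldsymbol{w}^\infty\in\R^{d\times d}$ means invertibility, so the linear system $\boldsymbol{w}^\infty \boldsymbol{y} = \boldsymbol{z}-\boldsymbol{b}^\infty$ has a unique solution, labelled $\boldsymbol{y}_i$. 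The constraints $\rho_i\in[0,1]$ and $\sum_i\rho_i=1$ then guarantee that $g^\infty$ is a probability measure, in agreement with the mass conservation already noted for~\eqref{eq:meanfield}.

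No step is genuinely hard: the proof is essentially a distributional computation glued to elementary linear algebra. The only point requiring a little care is the entrywise interpretation of $\sigma$ on $\R^d$, which is what upgrades the $n$ scalar roots to the $n^d$ admissible vector arguments, together with the observation that the finiteness of the limits $\boldsymbol{w}^\infty$ and $\boldsymbol{b}^\infty$ is precisely what makes the steady-state condition meaningful as an equation with an autonomous flux.
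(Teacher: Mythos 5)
Your proof is correct and follows essentially the same route as the paper's: test the stationary weak formulation against $\varphi\in C_c^\infty(\R^d)$, substitute the Dirac ansatz, and conclude that each $\boldsymbol{y}_i$ must send $\boldsymbol{w}^\infty\boldsymbol{y}_i+\boldsymbol{b}^\infty$ into the zero set of $\sigma$, the $n^d$ locations arising from the componentwise action of $\sigma$ and the invertibility granted by the maximum-rank assumption. You simply spell out the sifting step, the counting of the $n^d$ root vectors, and the linear-algebra step that the paper leaves implicit.
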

\begin{proof}
For a test function $\phi(\boldsymbol{x})\in C_0^{\infty}(\R^d)$ the weak steady state $g^\infty(\boldsymbol{x})$ of~\eqref{eq:meanfield} solves
\begin{align}
\int\limits_{\R^d}  \nabla_{\boldsymbol{x}}\phi(\boldsymbol{x})  \sigma\left( \boldsymbol{w}^{\infty}   \boldsymbol{x}  + \boldsymbol{b}^{\infty} \right) g^{\infty}(\boldsymbol{x}) =0. \label{SteadyMF}
\end{align}
If $g^{\infty}(\boldsymbol{x})$ is defined by~\eqref{eq:weakSS}, then equation~\eqref{SteadyMF} is satisfied only if $\boldsymbol{y}_i$ is the solution to the system $\boldsymbol{w}^\infty \boldsymbol{y}  + \boldsymbol{b}^\infty=\boldsymbol{z}$, with $\boldsymbol{z}$ any disposition of $d$ elements with repetition of the $n$ zeros of the activation function $\sigma$.
\end{proof}

Proposition~\ref{Prop::Steady} has an immediate consequence on the choice of the activation function. Unless $\sigma$ has more than one zero, the steady state of the mean-field equation is characterized by a single Dirac delta. This occurs choosing, e.g., the identity activation function $\sigma_I(x)$ or the hyperbolic tangent activation function $\sigma_T(x)$. Instead, choosing the growing cosine unit activation function $\sigma_{\text{GCU}}(x)$ it is possible to recover a linear combination of Dirac delta at steady state.

\subsection{Moment Properties of the One-Dimensional Mean-Field Equation}\label{Moment}

Among the problems that machine learning aims to solve, classification refers to a predictive task where a class label is identified for a given input data. An easy example is provided by the classification of emails as ``spam'' or ``not spam''. Labels can be also multiple. Therefore, in classification problems one typically aims to cluster a data set in given classes. From a mathematical perspective, and in particular using the mean-field formulation of the neural network, classification can be seen as the problem of driving the distribution of the input data to a distribution of Dirac delta functions. Proposition~\ref{Prop::Steady} already states that there exist Dirac delta steady solutions of the mean-field equation.

Here, we aim to characterize choices of the parameters to obtain this class of steady states by analyzing the moments in a simple setting. We recall, in fact, that it is not the aim of this work to determine the parameters by means of a classical training procedure.

We focus on one dimensional input data, i.e.~$d=1$. Then the mean-field equation~\eqref{eq:meanfield} reduces to
\begin{equation} \label{eq:meanfield1D}
\partial_t g(t,x) + \partial_x \Big(  \sigma\big(w(t) x+ b(t)\big) g(t,x)  \Big) = 0.
\end{equation}
We define the $k$-th moment, $k\geq0$, and variance of the probability distribution $g$ by
\begin{equation} \label{eq:momentDef}
m_k(t):= \int\limits_{\R} x^k\ g(t,x)\  \d x,\quad \mathbb{V}(t)= m_2(t)-(m_1(t))^2.
\end{equation}
Clearly, the possibility to obtain a moment model is solely determined by the shape of the activation function $\sigma(\cdot)$.
We study properties of the solution $g$ with respect to characterizations of $w$ and $b$. The properties are specified below, based on the expected value and energy of the solution to the mean-field equation.

\begin{definition} \label{def:concentration}
We say that the solution $g(t,y)$, $(t,y)\in\R^+\times\R$, to the mean-field equation~\eqref{eq:meanfield1D} is characterized by
\begin{itemize}
	\item[(i)] local energy bound if $$
	m_2(0) > m_2(t),
	$$
	holds at a fixed time $t$;
\item[(ii)] energy decay if
	$$
	m_2(t_1) >  m_2(t_2),
	$$
	holds for any $t_1<t_2$, i.e.~if the energy is decreasing with respect to time;
\item[(iii)] local aggregation if
	$$
	\mathbb{V}(0) > \mathbb{V}(t),
	$$
	holds at a fixed time $t$;
\item[(iv)] aggregation if
	$$
		\mathbb{V}(t_1)>  \mathbb{V}(t_2),
	$$
	holds for any $t_1<t_2$, i.e.~if the variance is decreasing with respect to time;
\item[(v)] concentration or clustering if
	$$
		\lim_{t\to\infty} \mathbb{V}(t) = 0,
	$$
	i.e.~if the variance vanishes in the long time behavior.
	\end{itemize}
\end{definition}

We observe that if the first moment is conserved in time, then definition of local energy bound, i.e.~(i), is equivalent to local aggregation, i.e.~(iii), and definition of energy decay, i.e.~(ii), is equivalent to aggregation, i.e~(iv). In terms of a residual neural network, satisfying concentration or aggregation phenomena means that we can recover output distributions with decreasing variance with respect to the input distribution. In particular, clustering is important for classification tasks, as specified before.

\subsubsection{The Case of the Identity Activation Function}

A simple computation reveals that the 0-th moment is conserved, i.e.~$m_0(t)=1$ holds for all times $t\geq0$, as we expect since~\eqref{eq:meanfield} is a conservation law. Instead, for $k\geq 1$ the behavior in time of the corresponding moment is prescribed by the following linear ordinary differential equation:
\begin{align}\label{eq::momentODE}
\frac{\d}{\d t} m_k(t) = k\ \Big( w(t)\ m_k(t)+ b(t)\ m_{k-1}(t) \Big),\quad  m_k(0)=m_k^0.
\end{align}
Notice that the $k$-th moment only depends on the $(k-1)$-th moment. It is then possible to solve the moment equations iteratively with the help of the separation of variables formula obtaining
\begin{equation} \label{eq:momentSol}
m_k(t)= e^{\Phi_k(t)} \left( m_k(0) +  k \int\limits_0^t   e^{-\Phi_k(s)}  b(s) m_{k-1}(s) \d s \right),
\end{equation}
where
$$
	\Phi_k(t):=k \int\limits_0^t w(s) \d s.
$$

We study conditions on the parameters to observe the phenomena described in Definition~\ref{def:concentration}.

\begin{proposition} \label{prop1}
Assume that the bias is identical to zero, namely $b(t)\equiv 0$, $\forall\,t\geq 0$. Then we obtain
\begin{enumerate}
\item[(a)] local energy bound if $\Phi_1(t)<0$ at a fixed time $t$;
\item[(b)] energy decay if and only if $w(t)<0$ for all $t>0$;
\item[(c)] clustering if and only if  $\lim\limits_{t\to\infty} \Phi_1(t)=-\infty$.
In particular the steady state is distributed as a Dirac delta centered at $x=0$. 
\end{enumerate}
\end{proposition}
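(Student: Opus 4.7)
The plan is to reduce everything to closed-form formulas for the first two moments, so that parts (a)--(c) become statements about the scalar function $\Phi_1(t)$.

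First I would substitute $b(t)\equiv 0$ into the moment ODE~\eqref{eq::momentODE}, which decouples the hierarchy: each moment satisfies $\frac{\d}{\d t} m_k(t) = k\,w(t)\,m_k(t)$ with no source term. Integrating the separable ODE gives
\[
m_k(t)=m_k(0)\,e^{k\Phi_1(t)},\qquad k\geq 1,
\]
and in particular $m_1(t)=m_1(0)e^{\Phi_1(t)}$ and $m_2(t)=m_2(0)e^{2\Phi_1(t)}$. Substituting into the definition~\eqref{eq:momentDef} of the variance yields the key identity
\[
\mathbb{V}(t)=m_2(0)e^{2\Phi_1(t)}-m_1(0)^2 e^{2\Phi_1(t)}=\mathbb{V}(0)\,e^{2\Phi_1(t)}.
\]

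With these formulas the three items are essentially bookkeeping. For (a), $m_2(t)<m_2(0)$ is equivalent to $e^{2\Phi_1(t)}<1$, i.e.\ $\Phi_1(t)<0$, because the exponential is strictly monotone. For (b), strict monotonicity of $m_2$ is equivalent to strict monotonicity of $t\mapsto\Phi_1(t)$, which (for continuous $w$) holds if and only if $w(t)=\Phi_1'(t)<0$ for all $t>0$; I would spell this out by differentiating $m_2(t)=m_2(0)e^{2\Phi_1(t)}$ and noting that $m_2>0$ as long as the initial data are not identically a Dirac mass at the origin. For (c), the formula $\mathbb{V}(t)=\mathbb{V}(0)e^{2\Phi_1(t)}$ makes the equivalence $\lim_{t\to\infty}\mathbb{V}(t)=0 \Leftrightarrow \lim_{t\to\infty}\Phi_1(t)=-\infty$ immediate.

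The only step that deserves slightly more care is the final claim that the steady state is a Dirac delta centered at $x=0$. For this I would combine two observations: on the one hand $m_1(t)=m_1(0)e^{\Phi_1(t)}\to 0$ and $\mathbb{V}(t)\to 0$, so the family $\{g(t,\cdot)\}$ is tight and any weak-$\ast$ limit has zero mean and zero variance, hence must be $\delta_0$; on the other hand the identity activation $\sigma_I$ has the unique zero $z=0$, so Proposition~\ref{Prop::Steady} (applied with $w^\infty\neq 0$, which is guaranteed since $\Phi_1(t)\to-\infty$ forces $w$ to be nontrivial) independently identifies the admissible steady state as $\delta(x)$. This is the one step where I expect to have to be slightly careful, since the proposition itself is formulated assuming the limits $w^\infty,b^\infty$ exist, whereas here we only need $\Phi_1(t)\to-\infty$; the moment identification above is what makes the conclusion rigorous without that stronger hypothesis.
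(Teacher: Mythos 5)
Your proposal is correct and follows essentially the same route as the paper: with $b\equiv 0$ the moment hierarchy decouples and~\eqref{eq:momentSol} collapses to $m_k(t)=m_k(0)e^{\Phi_k(t)}=m_k(0)e^{k\Phi_1(t)}$, after which (a)--(c) follow by applying Definition~\ref{def:concentration} directly. Your extra care with the identification of the limit as $\delta_0$ (via $m_1(t)\to 0$, $\mathbb{V}(t)\to 0$ and Chebyshev-type concentration, rather than relying on Proposition~\ref{Prop::Steady} whose hypotheses on $w^\infty$ need not hold here) is a welcome refinement of the paper's terser argument, but it does not change the approach.
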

\begin{proof}
If $b(t)\equiv 0$ then~\eqref{eq:momentSol} simplifies to 
\begin{equation} \label{eq:momentZeroBias}
	m_k(t)= m_k(0) e^{\Phi_k(t)}
\end{equation}
and thus the first and the second moment are identical except the given initial conditions. Then we can easily apply the definitions of energy bound, energy decay and clustering to prove the statement. 
\end{proof}

The previous result suggests that $b(t)\equiv 0$ is a condition which allows to solve clustering problems at the origin, independently on the initial first moment. The following corollary, instead, establishes equivalence of the phenomena defined in Definition~\ref{def:concentration} under the hypothesis of Proposition~\ref{prop1}.

\begin{corollary}
Assume that the bias is identical zero, namely $b(t)\equiv 0$, $\forall\,t\geq 0$. Then 
\begin{enumerate}
\item[(a)] if local energy bound exists at a time $t$ we have local aggregation;
\item[(b)] if energy decay holds we have aggregation.
\end{enumerate}
\end{corollary}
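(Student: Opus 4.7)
The plan is to derive both statements directly from the explicit moment formula~\eqref{eq:momentZeroBias} that was established in Proposition~\ref{prop1}. Since $b(t)\equiv 0$, the first two moments evolve as $m_1(t)=m_1(0)e^{\Phi_1(t)}$ and $m_2(t)=m_2(0)e^{\Phi_2(t)}$, and the key observation is that $\Phi_k(t)=k\Phi_1(t)/1$, so in particular $\Phi_2(t)=2\Phi_1(t)$. Substituting into the definition of the variance I would obtain
\begin{equation*}
\mathbb{V}(t) = m_2(0)\,e^{2\Phi_1(t)} - m_1(0)^2\,e^{2\Phi_1(t)} = e^{2\Phi_1(t)}\,\mathbb{V}(0),
\end{equation*}
which is the identity that does all the work: the variance at time $t$ is proportional to the initial variance through exactly the same exponential factor that governs $m_2$.

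For part~(a), I would argue that local energy bound at $t$, namely $m_2(0)>m_2(t)=m_2(0)e^{2\Phi_1(t)}$, immediately forces $e^{2\Phi_1(t)}<1$. The displayed identity then yields $\mathbb{V}(t)<\mathbb{V}(0)$, which is exactly local aggregation in the sense of Definition~\ref{def:concentration}(iii). For part~(b), I would invoke Proposition~\ref{prop1}(b): energy decay is equivalent to $w(t)<0$ for all $t>0$, which makes $\Phi_1$ strictly decreasing on $\R^+$. Applied to the formula $\mathbb{V}(t)=e^{2\Phi_1(t)}\mathbb{V}(0)$, this gives $\mathbb{V}(t_1)>\mathbb{V}(t_2)$ whenever $t_1<t_2$, establishing aggregation.

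There is no real obstacle here; the corollary is essentially a bookkeeping statement because the assumption $b\equiv 0$ makes the $m_1^2$ term scale with the same exponential as $m_2$. The only point worth stating cleanly in the write-up is the observation $\Phi_2=2\Phi_1$, since it is this algebraic coincidence, and not any monotonicity of the individual moments, that guarantees the equivalence between energy-type and variance-type statements in this regime.
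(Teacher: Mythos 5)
Your proof is correct and follows essentially the same route as the paper's: both arguments rest on the explicit zero-bias solution $m_k(t)=m_k(0)e^{\Phi_k(t)}$ together with the observation $\Phi_2=2\Phi_1$, which you merely package more transparently as the single identity $\mathbb{V}(t)=e^{2\Phi_1(t)}\mathbb{V}(0)$. The only caveat, which the paper's own computation also surfaces (its chain of equivalences terminates in the condition $m_2(t_1)>m_1(t_1)^2$), is that the strict inequalities require the nondegeneracy $\mathbb{V}(0)>0$, i.e.\ the initial datum is not a Dirac delta.
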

\begin{proof}
	We start proving the first statement. First we observe that~\eqref{eq:momentZeroBias} is still true, since by hypothesis we assume that the bias is zero. Due to the definition of local aggregation we need to verify that $\mathbb{V}(0) > \mathbb{V}(t)$ for a fixed time $t$. Using the definition of the variance, local aggregation is implied by
	$$
		m_2(0) (1-e^{\Phi_2(t)}) > m_1(0)^2 (1-e^{2\Phi_1(t)}).
	$$
	For the second part, we observe that aggregation phenomena is verified if $\mathbb{V}(t_1) > \mathbb{V}(t_2)$ for any $t_1 < t_2$. This is equivalent to 
	\begin{align*}
	&m_2(t_1)-m_1(t_1)^2 > m_2(t_2)- m_1(t_2)^2\\
	\iff &  m_2(t_1)-m_2(t_2) > (m_1(t_1)-m_1(t_2))\ (m_1(t_1)+m_1(t_2))\\
	\iff & m_2(t_1) > m_1(t_1)^2.
	\end{align*}

\end{proof}

Conservation of the first moment is guaranteed by choosing $b(t):= -w(t) m_1(t)$. See the following results.

\begin{proposition}\label{prop2}
Let the bias be $b(t):= -w(t) m_1(t)$, $\forall t\geq0$. Then the first moment $m_1$ is conserved in time and we obtain 
\begin{enumerate}
 \item[(a)] local energy bound if $\Phi_2(t)<0$ at a fixed time $t$;
 \item[(b)] clustering phenomenon if $w(t)<0$ holds for all $t\geq 0$. In particular the steady state is distributed as a Dirac delta centered at $x=m_1(0)$.
\end{enumerate}
\end{proposition}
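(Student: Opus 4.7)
The plan is to exploit the substitution $b(t)=-w(t)m_1(t)$ directly in the moment ODE~\eqref{eq::momentODE}. First I would take $k=1$ and use $m_0\equiv 1$, which gives $\dot m_1 = w(t)m_1(t) + b(t) = w(t)m_1(t) - w(t)m_1(t) = 0$. This immediately yields conservation of the first moment, $m_1(t)\equiv m_1(0)$ for all $t\geq 0$, so the prescription of $b$ is in fact self-consistent and reduces to the linear feedback $b(t)=-w(t)m_1(0)$.

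Next I would rewrite the $k=2$ equation in terms of the variance. Substituting $b$ and using $m_1(t)=m_1(0)$ in~\eqref{eq::momentODE} gives
\begin{equation*}
\frac{\d}{\d t}m_2(t) = 2\bigl(w(t)m_2(t) - w(t)m_1(0)^2\bigr) = 2w(t)\,\mathbb{V}(t).
\end{equation*}
Because $\mathbb{V}(t)=m_2(t)-m_1(0)^2$ differs from $m_2(t)$ by a constant, $\dot{\mathbb{V}} = \dot m_2 = 2w(t)\mathbb{V}(t)$. Separation of variables, exactly as in Proposition~\ref{prop1}, produces the closed form $\mathbb{V}(t)=\mathbb{V}(0)\exp(\Phi_2(t))$.

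For part (a), I would then compute $m_2(0)-m_2(t) = \mathbb{V}(0) - \mathbb{V}(t) = \mathbb{V}(0)\bigl(1-e^{\Phi_2(t)}\bigr)$; assuming a non-degenerate initial distribution so $\mathbb{V}(0)>0$, the hypothesis $\Phi_2(t)<0$ gives $m_2(0)>m_2(t)$, which is the local energy bound in the sense of Definition~\ref{def:concentration}(i). For part (b), $w(t)<0$ for all $t\geq 0$ forces $\Phi_2(t)\to-\infty$ as $t\to\infty$ (arguing as in Proposition~\ref{prop1}(c), modulo the same tacit assumption that $w$ is not summable near infinity), so $\mathbb{V}(t)\to 0$. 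Combined with $m_1(t)\equiv m_1(0)$, vanishing variance together with a fixed mean pins the limit distribution down to a Dirac delta at $m_1(0)$; this is also consistent with Proposition~\ref{Prop::Steady} applied to the shifted microscopic dynamics $\dot x_i = w(t)(x_i-m_1(0))$ whose unique zero of $\sigma_I(\cdot)$ corresponds to $x=m_1(0)$.

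The only delicate step is the passage from $w(t)<0$ to $\Phi_2(t)\to-\infty$, which is not automatic (e.g.\ $w(t)=-e^{-t}$ gives a finite limit of $\Phi_2$). I would either adopt the sharper hypothesis $\lim_{t\to\infty}\Phi_2(t)=-\infty$ in analogy with Proposition~\ref{prop1}(c), or assume $w$ is bounded away from zero. All remaining computations are straightforward manipulations with the scalar linear ODE for $\mathbb{V}$.
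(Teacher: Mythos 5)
Your proposal is correct and follows essentially the same route as the paper: substitute $b(t)=-w(t)m_1(t)$ into the moment ODE to obtain conservation of $m_1$ and the closed form $m_2(t)=\mathbb{V}(0)e^{\Phi_2(t)}+m_1(0)^2$, from which (a) and (b) follow (the paper states this formula directly rather than deriving it via the variance ODE, and tacitly assumes $\mathbb{V}(0)>0$ in (a) as you do). The caveat you raise at the end is well taken: the paper's own proof simply asserts that $w(t)<0$ for all $t$ forces $m_2(t)\to m_1(0)^2$, which, as your counterexample $w(t)=-e^{-t}$ shows, actually requires the stronger hypothesis $\lim_{t\to\infty}\Phi_2(t)=-\infty$, i.e.\ the condition correctly used in Proposition~\ref{prop1}(c).
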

\begin{proof}
	The solution formula for the second moment is
	$$
		m_2(t)= e^{ \Phi_2(t) } (m_2(0)-m_1(0))^2) + m_1(0)^2 = e^{ \Phi_2(t) } \mathbb{V}(0) + m_1(0)^2.
	$$
	Then we have local energy bound if
	$$
		\mathbb{V}(0) \left( e^{ \Phi_2(t) } -1 \right) < 0
	$$
	which is satisfied assuming that $\Phi_2(t)<0$ at a fixed time $t$. For the second statement we observe that concentration to a delta is also implied by $\Phi_2(t)<0$ for all times $t$, so that $m_2(t) \to m_1(t)^2$ for $t\to\infty$.
	This occurs if $w(t)<0$ for all times.
\end{proof}

We aim to discuss the impact of the variance on aggregation and concentration phenomena. 
This is especially interesting if we focus on a finite time horizon and we aim to know if $\mathbb{V}(T)\leq V$ for some tolerance $V>0$ and time $T>0$. In applications this level would be determined by the variance of the target distribution and allows to avoid over-fitting phenomena.

\begin{corollary}\label{cor2}
If the bias is identical to zero, namely $b(t)\equiv 0$, $\forall\,t\geq0$, then the energy at time $t>0$ is below tolerance $V>0$ if
$$
	\Phi_2(t) < \ln\left(\frac{V}{m_2(0)}\right)
$$
is satisfied. Instead, the variance is below the level $V>0$ if
$$
	\Phi_2(t) < \ln\left( \frac{V}{\mathbb{V}(0)} \right)
$$
holds.

Similarly, if the bias fulfills $b(t):= -w(t) m_1(t)$, then the energy at time $t>0$ is below the level $V>0$ if
$$\Phi_2(t)< \ln\left(\frac{V-m_1(0)^2}{ \mathbb{V}(0) } \right)$$
 is satisfied  provided that $V> m_1(0)^2$ holds. Instead, the variance at time $t>0$ is below the level $V>0$ if
$$\Phi_2(t)< \ln\left(\frac{V}{ \mathbb{V}(0) } \right)$$
 is satisfied  provided that $V>0$ holds.
\end{corollary}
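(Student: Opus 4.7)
My plan is to reduce each of the four assertions to a direct inequality involving the explicit solution formulas already derived in Propositions~1 and~2. Since each case asks only whether $m_2(t)<V$ or $\mathbb{V}(t)<V$, the proof amounts to isolating $\Phi_2(t)$ in a closed-form expression and applying the logarithm.

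First I would handle the unbiased case $b(t)\equiv 0$. From~\eqref{eq:momentZeroBias} applied with $k=2$ I immediately get $m_2(t)=m_2(0)e^{\Phi_2(t)}$, so $m_2(t)<V$ is equivalent to $e^{\Phi_2(t)}<V/m_2(0)$, from which the first bound follows by taking logarithms (note $m_2(0)>0$ unless the initial distribution is a Dirac at the origin, in which case the claim is vacuous). For the variance bound I would use the key identity $\Phi_2(t)=2\Phi_1(t)$, which is immediate from the definition of $\Phi_k$. Combining~\eqref{eq:momentZeroBias} for $k=1$ and $k=2$ gives
\begin{equation*}
\mathbb{V}(t) = m_2(0)e^{\Phi_2(t)} - m_1(0)^2 e^{2\Phi_1(t)} = \bigl(m_2(0)-m_1(0)^2\bigr)e^{\Phi_2(t)} = \mathbb{V}(0)\,e^{\Phi_2(t)},
\end{equation*}
and the stated bound follows at once.

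Next I would treat the case $b(t)=-w(t)m_1(t)$. Here Proposition~2 already supplies the closed form $m_2(t) = e^{\Phi_2(t)}\mathbb{V}(0) + m_1(0)^2$. The inequality $m_2(t)<V$ is therefore equivalent to $e^{\Phi_2(t)}\mathbb{V}(0) < V - m_1(0)^2$; since the left-hand side is non-negative, this forces the compatibility condition $V>m_1(0)^2$, and under that hypothesis taking logarithms yields the claimed bound. For the variance, conservation of the first moment gives $\mathbb{V}(t) = m_2(t) - m_1(0)^2 = e^{\Phi_2(t)}\mathbb{V}(0)$, so $\mathbb{V}(t)<V$ is equivalent to $e^{\Phi_2(t)}<V/\mathbb{V}(0)$, giving the last inequality.

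There is no real obstacle: the whole statement is a direct corollary of the explicit moment formulas, and the only nontrivial observation is the doubling relation $\Phi_2=2\Phi_1$ that collapses the variance in the unbiased case into a single exponential factor. The mildly subtle point worth stating explicitly is the side condition $V>m_1(0)^2$ in the biased case, which is not an extra hypothesis but rather a consistency requirement forced by $m_2(t)\geq m_1(0)^2$.
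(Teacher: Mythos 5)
Your proof is correct and follows exactly the route the paper intends: the corollary is stated without proof precisely because it is an immediate consequence of the explicit formulas $m_2(t)=m_2(0)e^{\Phi_2(t)}$ (unbiased case) and $m_2(t)=\mathbb{V}(0)e^{\Phi_2(t)}+m_1(0)^2$ (first-moment-conserving case), combined with the identity $\Phi_2=2\Phi_1$ and taking logarithms. Your explicit remark that $V>m_1(0)^2$ is a forced consistency condition rather than an independent hypothesis is a nice touch, but otherwise there is nothing to add.
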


\begin{remark}
In general it is not possible to obtain a closed moment model in the case of the sigmoid $\sigma_S(x)$ or hyperbolic tangent $\sigma_T(x)$ activation function.
Nevertheless one might approximate both activation functions by the linear part of their series expansion:
\begin{align*}
\sigma_S(x)\approx \frac{1}{2}+ \frac{x}{4}, \quad \sigma_T(x)\approx  x,
\end{align*}
and apply similar analysis discussed above for the identity activation function.
\end{remark}

\begin{remark}
In the case of the ReLU activation function we decompose the moments $k\geq 1$ in two parts
$$
m_k(t) = \int\limits_{\Omega^+(t)} x^k g(t,x) \d x + \int\limits_{\Omega^-(t)} x^k g(t,x) \d x,
$$
and we define
$$
m_k^+(t):= \int\limits_{\Omega^+(t)} x^k g(t,x) \d x, \quad 
m_k^-(t):= \int\limits_{\Omega^-(t)} x^k g(t,x) \d x
$$
where $\Omega^+(t) := \{ x\in\R \ | \ x > -\frac{b(t)}{w(t)}  \}$ and $\Omega^-(t) := \R\setminus \Omega^+(t)$.

Let us define $a(t) = -\frac{b(t)}{w(t)}$, then using the Leibniz integration rule we compute
\begin{equation} \label{eq:mMinusEvolution}
	\frac{\d}{\d t} m_k^-(t) = a(t)^k g(t,a(t)) \frac{\d}{\d t} a(t)
\end{equation}
and

\begin{align}\label{eq:mPlusEvolution}
\frac{\d}{\d t} m_k^+(t) =& -a(t)^k g(t,a(t)) \frac{\d}{\d t} a(t) + k w(t) m_k^+(t) + k b(t) m_{k-1}^+(t)\\
	& + a(t)^{k+1}\ w(t)\ g(t,a(t)) + a(t)^k\ b(t)\ g(t,a(t)) .
\end{align} 

Consequently, the evolution equation for the $k$-th moment cannot be expressed by a closed formula since it depends on the partial moment on $\Omega^+(t)$ and boundary conditions:
\begin{equation} \label{eq:mEvolution}
	\frac{\d}{\d t} m_k(t) = k \left( w(t) m_k^+(t) + b(t) m_{k-1}^+(t) \right)+ a(t)^{k+1}\ w(t)\ g(t,a(t)) + a(t)^k\ b(t)\ g(t,a(t)).
\end{equation}

In the case of constant weights and bias the equality $\dot{m}_k= \dot{m}_k^+$ holds. Notice also that the above discussion simplifies in the case of vanishing bias, and it becomes equivalent to the case when the activation function is the identity function. In fact, if $b(t) \equiv 0$, $\forall\, t\geq0$, then the set $\Omega^+$ switches to be $(-\infty, 0)$ or $(0,\infty)$, depending on the sign of the weight $w(t)$. Moreover, thanks to~\eqref{eq:mMinusEvolution} we immediately obtain that $\frac{\d}{\d t} m_k^-\equiv 0$ holds true and thus $\frac{\d}{\d t} m_k(t)= \frac{\d}{\d t} m_k^+(t)$ is satisfied for all $t \geq 0$. Hence, the evolution equation~\eqref{eq:mEvolution} reduces to the case \eqref{eq::momentODE} and same computations can be performed.
\end{remark}

\subsection{Forward Re-training Algorithm of the Mean-Field Neural Network}\label{sensitivity}

We introduce a novel re-training algorithm for the weights and bias of the mean-field formulation of the SimResNet. Classical back-propagation algorithms, typically used for the training process of a network, suffer from the high computational cost. We derive a forward algorithm with the help of the sensitivity analysis. More precisely, we define a loss function, coupled to the mean-field equation, which we aim to minimize. In this setting the training of a the mean-field SimResNet boils down to an optimal control problem. Therefore, we employ adjoint calculus in order to compute the sensitivity quantities of the loss function, i.e.~the gradients of the loss function with respect to the weights and the bias. This sensitivity quantities lead to a gradient-type algorithm which computes the update of the parameters in the case of a perturbed target or new initial condition. As stated before, the benefit of this algorithm is the possibility to avoid a retraining via back-propagation in the case of perturbations.

The quantity of interest is the distance of the function $g$ at a finite time $T$ to the target distribution $h$. We consider the following loss function:
\begin{align}
	D(T;w,b,g_0) := \frac{1}{2} \int\limits_\R |g(T,x)-h(x))|^2 \d x. \label{lossFunc}
\end{align}
The minimization of~\eqref{lossFunc} subject to the mean-field equation defines an optimal control problem:
\begin{align*}
\arg\min\limits_{w,b} \ \ &D(T;w,b,g_0)\\
\text{s.t.} \ \ &\partial_t g(t,x) + \partial_x \Big(  \sigma\big(w(t) x+ b(t) \big) g(t,x)  \Big) = 0,\\
&g(0,x)=g_0(x).
\end{align*}
We may expect that training was expensive and will not necessarily be done again if input or target changes. Therefore, it is of interest if the trained network, so $w$ and $b$, can be reused if $g_0$ and $h$ change. We compute the corresponding sensitivity quantities of the loss function $D$ with respect to the weights and the bias. This in turn can be used to apply a gradient step on $w$ and $b$. In other words, we use adjoint calculus to adjust the parameters to the perturbations introduced in $g_0$ and $h$.

The formal Lagrangian reads
\begin{align*}
 L(g,\lambda,w,b) =& D(T)-\int\limits_0^T \int\limits_{\R} g(t,x) \Big(\partial_t \lambda(t,x) + \sigma(w x +b) \partial_x \lambda(t,x) \Big) \d x \d t \\ &+ \int\limits_{\R} \left(\lambda(T,x) g(T,x)- \lambda(0,x) g(0,x)\right) \d x
\end{align*}
with Lagrange multiplier $\lambda(t,x)$. The corresponding adjoint equation can be obtained from the functional derivative of the Lagrangian:
\begin{align*}
 \partial_g L(g,\lambda,w,b) [\delta g] =& \int\limits_{\R} |g(T,x)-h(x)| \delta g(T,x) \d x - \int\limits_0^T \int\limits_{\R} \delta g \Big(\partial_t \lambda(t,x)+ \sigma(w x+b)  \partial_x \lambda(t,x) \Big) \d t \d x\\
  &+ \int\limits_{\R} \lambda(T,x) \delta g(T,x) \d x.
\end{align*}
Furthermore, we obtain the sensitivity quantities as 
\begin{align*}
\partial_w L(g,\lambda, w,b) [\delta w] &= \int\limits_0^T \delta w \int\limits_{\R} g(t,x) x \sigma^{\prime}(wx + b) \partial_x \lambda \, \d t \d x\\
\partial_b L(g,\lambda,w,b) [\delta b] &= \int\limits_0^T \delta b \int\limits_{\R} g(t,x) \sigma^{\prime}(wx + b) \partial_x \lambda \, \d t \d x. 
\end{align*}
Hence, the strong form of the formal first order optimality conditions are given by:
\begin{align*}
\partial_t \lambda(t,x) + \sigma(wx+ b) \partial_x\lambda(t,x) &= 0\\
\lambda(T,x) &= |g(T,x)-h(x)|\\
\partial_t g(t,x)+ \partial_x\Big(\sigma\big(w x+ b\big) g(t,x)\Big) &= 0\\
g(0,x) &= g_0(x)\\
\int\limits_{\R} g(t,x) x \sigma^{\prime}(wx + b) \partial_x\lambda \, \d x &= 0, \ \forall t\in[0,T]\\
\int\limits_{\R} g(t,x) \sigma^{\prime}(wx + b) \partial_x\lambda \, \d x &= 0, \ \forall t\in[0,T]
\end{align*}
where $\sigma'(x)$ is the derivative of the activation function. It is assumed that $\sigma$ is differentiable, i.e.~either $\sigma=\sigma_T$ or $\sigma=\sigma_S$.
Adjustment of optimal weights and bias can be then obtained via gradient step. The update of the parameters after a perturbation of the initial condition $g_0$ and the target $h$ is summarized in Algorithm~\ref{alg:sensitivity}.

\begin{algorithm}[t!]
	\begin{algorithmic}[1]
		\STATE Initially select the optimized parameters $w^0$ and $b^0$;
		\STATE Update $D$ by the new perturbed initial data $g_0$ and target $h$;
		\STATE Choose a tolerance $tol>0$;
		\WHILE{$|w^{k+1} - w^k|+ |b^{k+1} - b^k|<tol$}
		\STATE Compute the new updates as
		\begin{align*}
		& w^{k+1} = w^k -  \gamma \int\limits_{\R} g\ x\ \sigma^{\prime}\ \partial_x\lambda \, \d x  \\
		& b^{k+1} = b^k - \gamma\ \int\limits_{\R} g\ \sigma^{\prime}\ \partial_x\lambda \, \d x.
		\end{align*}
		\ENDWHILE
	\end{algorithmic}
	\caption{Forward update of the parameters of the mean-field SimResNet.}
	\label{alg:sensitivity}
\end{algorithm}

\section{Boltzmann-type Formulation of the SimResNet}

In general, a mean-field equation can be obtained as suitable asymptotic limit of a Boltzmann-type space homogeneous kinetic equation. We show that this is true also for the mean-field limit~\eqref{eq:meanfield} of the neural differential equation~\eqref{eq:simplifiedNN} by suitably defining instantaneous microscopic interactions emerging from the continuous dynamics. We point-out that in this approach the measurements are seen as particles.

In the case of one-dimensional measurements the mean-field equation can be derived from a linear Boltzmann-type equation. In fact, the system of ODEs~\eqref{eq:simplifiedNN} can be recast as the following interaction rule:
\begin{align}
x^*=x+\sigma\big(w(t) x +b(t)\big), \label{interactionBoltzmann}
\end{align}
where, by using the kinetic terminology, $x^*$ and $x$ are the post- and the pre-collision states, respectively. The post-interaction rule~\eqref{interactionBoltzmann} can be obtained from~\eqref{eq:simplifiedNN} by an explicit Euler discretization with time step $\Delta t=1$. Notice that although~\eqref{interactionBoltzmann} seems to not depend on the other particles, i.e.~the measurements, the interactions are hidden in the parameters $w$ and $b$ which are indeed chosen based on the whole set of data.

The weak form of the Boltzmann-type equation corresponding to~\eqref{interactionBoltzmann} reads
\begin{align}\label{eq::Boltzmann1D}
 \frac{\d}{\d t} \int\limits_\R \phi(x) g(t,x) \d x = \frac{1}{\tau} \int\limits_\R \Big( \phi(x^*)-\phi(x) \Big) g(t,x) \d x
\end{align}
where $\tau$ represents the interaction rate and $\phi\in C^\infty_0(\R)$ is a test function. Derivation and formulation of~\eqref{eq::Boltzmann1D} from dynamics like~\eqref{interactionBoltzmann} can be found in classical references on kinetic theory, e.g.~\cite{Arlotti2002567,BellomoMarsanTosin2013,PareschiToscaniBOOK}. We observe that since the post-interaction rule~\eqref{interactionBoltzmann} depends on other data only through $w$ and $b$, we obtain a linear Boltzmann-type collision kernel. In the present homogeneous setting, $\tau$ influences only the relaxation speed to equilibrium and thus, without loss of generality, in the following we take $\tau=1$.
The one-dimensional Boltzmann-type equation~\eqref{eq::Boltzmann1D} leads to the one-dimensional mean-field equation~\eqref{eq:meanfield1D} by suitable scaling, e.g.~see~\cite{PareschiToscani2006,PareschiToscaniBOOK,Toscani2006}.

\begin{remark}
	In the case of two-dimensional input signal, the SimResNet is structured with two neurons in each, cf.~Section~\ref{ssec:simresnet}. Then, we can reinterpret the differential formulation of the SimResNet~\eqref{eq:simplifiedNN} in the following binary interactions:
	\begin{align*}
		x^*= x + \sigma\left( \frac{w(t) x  +\bar{v}(t) y}{2} + b(t) \right),\\
		y^* =y +  \sigma\left( \frac{v(t) y+\bar{w}(t) x}{2} + d(t) \right).
	\end{align*}
	The interaction between the particles is now explicit and we have true binary interactions. This leads to a nonlinear space homogeneous Boltzmann-type equation.
\end{remark}

An advantage of the Boltzmann-type description~\eqref{eq::Boltzmann1D} is the possibility to study different asymptotic scales, in addition to the mean-field one, which can lead to equations with non-trivial steady states and that allows for an analytically characterization. These steady states are obviously parameterized by $w$ and $b$. Thus, it is possible to introduce a different concept of training in which the choice of the parameters of the network and of the activation function may be obtained by fitting the target distribution. 

In order to obtain non-trivial steady states of model \eqref{eq::Boltzmann1D} one may consider self-similar solutions~\cite{PareschiToscaniBOOK}:
$$
\bar{g}(t,x)=m_1(t) g(t, m_1(t) x). 
$$
In the the case of the identity activation function the first moment can be computed explicitly, as showed in Section~\ref{Moment}, and a Fokker-Planck type asymptotic equation can be derived. However, in order to study steady state profiles for arbitrary activation functions, we choose the following approach: we add noise to the microscopic interaction rule \eqref{interactionBoltzmann} and apply a grazing collision limit leading to a Fokker-Planck type equation.


\subsection{Fokker-Planck Description of the Simplified Residual Neural Network}
Let $\epsilon$ be a small parameter, weighting for the strength of the interactions. Let us modify the interaction~\eqref{interactionBoltzmann} as
\begin{equation} \label{eq:microRule}
	x^{*} = x + \epsilon \sigma(w(t) x + b(t) ) + \sqrt{\epsilon} K(x) \eta,
\end{equation}
where $\eta$ is a random variable with mean zero and variance $\nu^2$, and $K(x)$ is a diffusion function. For $K(x)=0$ \eqref{eq:microRule} corresponds to~\eqref{eq:defNN} with $\epsilon=\Delta t$ and to~\eqref{interactionBoltzmann} with $\epsilon=1$.

The introduction of noisy interactions as in~\eqref{eq:microRule} is motivated by the need of recovering a broader class of steady states, in addition to the trivial ones described by the mean-field equation. From the view-point of neural networks, this modeling assumption is inspired by the stochastic neural networks with stochastic output layers~\cite{Goldberger2017TrainingDN,NIPS2017_7096,Tran2019BayesianLA,DBLP:conf/icip/YouYLX019}. Stochastic neural networks consider random variations into the network, and one way to model these variations is using stochastic output layers which capture uncertainty over activations. It has been observed that this approach is useful for the optimization procedure, since the random fluctuations allow to escape from local minima. In the same spirit, we modify the deterministic interactions~\eqref{interactionBoltzmann} by describing layers with uncertainty about some state in its computation also in the forward propagation.

We point out that the~\eqref{interactionBoltzmann} is not linked to neural network structures presented in the literature. Rather, \eqref{interactionBoltzmann} is inspired by the existence of stochastic neural networks with stochastic output layers, which are defined by adding stochastic noise to the output of the neurons in each layer. The structure of the neural network corresponding to~\eqref{interactionBoltzmann} can be formulated as slight modification of~\eqref{eq:simplifiedNN} in terms of a system of stochastic differential equations with a noisy perturbation term weighted by $\frac{1}{\sqrt{\epsilon}}$. Discretization of this by Euler-Maruyama method with $\Delta t=\epsilon$ leads to~\eqref{eq:microRule}.

In the classical grazing collision limit $t = \epsilon t$, $\epsilon \to 0$, we recover the following Fokker-Planck equation for the scaled probability distribution $g$: 
\begin{equation} \label{eq:fokkerplanck}
	\partial_t g(t,x) + \partial_x \left[ \mathcal{B} g(t,x) - \mathcal{D} \partial_x g(t,x) \right] = 0
\end{equation}
where we define the interaction operator $\mathcal{B}$ and the diffusive operator $\mathcal{D}$ as
$$
	\mathcal{B} = \sigma(w(t)x+b(t)) - \frac{\nu^2}{2} \partial_x K^2(x), \quad \mathcal{D} = \frac{\nu^2}{2} K^2(x).
$$
The grazing collision limit is obtained by classical arguments and computations. Namely, starting from the linear Boltzmann-type equation~\eqref{eq::Boltzmann1D} one introduces a second order Taylor expansion of $\phi(x^*)$ around $x$. Studying the limit $\epsilon\to 0$ one gets~\eqref{eq:fokkerplanck}.

The advantage in computing the grazing collision limit is that the classical integral formulation of the Boltzmann collision term is replaced by differential operators. This allows a simple analytical characterization of the steady state solution $g^\infty(x)$ of~\eqref{eq:fokkerplanck}.
Provided the target can be well fitted by a steady state distribution of the Fokker-Planck equation, the weight, the bias and the activation function are immediately determined. This is a huge computational advantage in comparison to the classical training of neural networks.

In the following, we present examples of distributions that can be fitted by the solution of~\eqref{eq:fokkerplanck} in the large time behavior, finding conditions on the parameters and on the activation function.

\subsubsection{Steady State Characterization} \label{FPsteadystate}
Steady state solution of the Fokker-Planck equation~\eqref{eq:fokkerplanck} can be easily found as
\begin{equation} \label{eq:solutionFP}
	g^\infty(x) = \frac{C}{K^2(x)} \exp\left( \int \frac{2\sigma(w^\infty x + b^\infty)}{\nu^2 K^2(x)} \d x \right)
\end{equation}
where the constant $C\in\R$ is determined by mass conservation, i.e.~$\int\limits_\R g^\infty(x) \d x = 1$.
The existence and the explicit shape of the steady state is determined by the specific choice of the activation function $\sigma(\cdot)$, of the diffusion function $K(\cdot)$ and of the parameters $w^{\infty}$, $b^{\infty}$. 
We discuss below some typical cases.

If the target $h(x)$ is distributed as a Gaussian, then choosing $\sigma(x)=\sigma_I(x)$ and $K(x)=1$ the steady state~\eqref{eq:solutionFP} yields a suitable approximation of $h(x)$. In fact, we obtain 
$$
	g^\infty(x) = C \exp\left( \frac{w^\infty}{\nu^2} x^2 + 2 \frac{b^\infty}{\nu^2} x \right),
$$
which is a Gaussian provided that $w^\infty<0$ and $b^\infty = 0$. Moreover,
condition on mass conservation leads to
$$
	C = \frac{\sqrt{-\frac{w^\infty}{\nu^2}} \exp\left( \frac{(b^\infty)^2}{w^\infty \nu^2} \right)}{\sqrt{\pi}},
$$
which is defined for $w^\infty < 0$.

If the target $h(x)$ is distributed as an inverse Gamma, then choosing $\sigma(x)=\sigma_I(x)$ and $K(x)=x$ the steady state~\eqref{eq:solutionFP} is given by
$$
  g^{\infty}(x) = \begin{cases}
  0, &x\leq 0,\\
  \frac{C}{x^{1+\mu}} \exp\left( \frac{\mu-1}{x}\ \frac{b^{\infty}}{w^{\infty}}  \right), &x>0,
  \end{cases}
$$
with $\mu:=1+\frac{2\ w^{\infty}}{\nu^2}$ and normalization constant
$$
  C= \frac{\Big((1-\mu) \frac{w^{\infty}}{b^{\infty}}\Big)^{\mu}}{\Gamma(\mu)},
$$
where $\Gamma(\cdot)$ denotes the Gamma function. Notice that we have to assume that $w^{\infty}<0$ and $b^{\infty}>0$ hold in order to obtain a distribution.
  
Let $\sigma(x)=\sigma_R(x)$ and $K(x)=x$, and, without loss of generality, assume $w^{\infty}<0$ so that $\sigma_R$ is identical zero on the set $\Omega:=\{ x\in\R| x\geq -\frac{b^{\infty}}{w^{\infty}} \}$. The steady state on $\Omega$ is given by
$g^{\infty}|_{\Omega}= \frac{c}{x^2}, c>0$ and can be extended on $\mathbb{R}$ by the Pareto distribution:
$$
  g^{\infty}= \begin{cases}
  -\frac{b^{\infty}}{w^{\infty} }\frac{1}{x^2}, &x\geq  -\frac{b^{\infty}}{w^{\infty}} ,\\
  0, & x<   -\frac{b^{\infty}}{w^{\infty}} .
  \end{cases}
$$

As last example, we notice that if the activation and the diffusion function are chosen as
$$
\sigma_N(x) := \left[\frac{1}{\delta} \left(\frac{x}{c}\right)^{\delta}-1\right] x,\ 0<\delta<1,\ c>0,\quad K(x)=x,
$$
respectively, then
it is possible to obtain a generalized Gamma distribution as steady state. 
This specific model has been discussed in~\cite{dimarco2019kinetic} and the exponential convergence of the solution to the steady state has been proven in~\cite{otto2000generalization}. 
This may motivate to choose the novel activation function $\sigma_N(\cdot)$ provided that the target data are distributed as a generalized Gamma distribution.

\section{Numerical Experiments}

In this section we present two classical applications for machine learning algorithms, namely classification and regression problems.
Using these applications we validate the theoretical observations of the moment model analysis in Section~\ref{Moment}. In addition, we test the update algorithm for the weights and the bias derived by the sensitivity analysis in Section~\ref{sensitivity}.
Finally, we show that the Fokker-Planck interpretation of the neural network is able to fit non trivial continuous probability distributions. We point-out that the weights and the bias we use for these experiments are not trained by usual training procedures and are constant in time. In fact, it is not the aim of this paper discussing training of the parameters for the PDE formulations of the neural network.

In all the numerical simulations we solve the PDE models presented in this work by using a third order finite volume scheme~\cite{CPSV:cweno}, which is briefly reviewed below. All the cases we consider for the numerical experiments can be recast in the following compact formulation:
\begin{equation} \label{eq:generalPDE}
    \partial_t u(t,x) + \partial_x F(u(t,x),t,x) = \frac{\nu^2}{2} \partial_{xx} u(t,x) + k S(u(t,x)),
\end{equation}
with $\nu$ and $k$ given constants. Application of the method of lines to~\eqref{eq:generalPDE} on discrete cells $\Omega_{j}$ leads to the coupled system of ODEs
\begin{equation} \label{eq:semidiscrete}
    \frac{\mathrm{d}}{\mathrm{d}t} \overline{U}_j(t) = - \frac{1}{\Delta x} \left[ \mathcal{F}_{j+\nicefrac12}(t) - \mathcal{F}_{j-\nicefrac12}(t) \right] + \frac{\nu^2}{2} K_j(t) + k S_j(t),
\end{equation}
where $\overline{U}_{j}(t)$
is the approximation of the cell average of the exact solution in the cell $\Omega_{j}$ at time $t$.

Here, $\mathcal{F}_{j+\nicefrac12}(t)$ approximates $F(u(t,x_{j+\nicefrac12}),t,x_{j+\nicefrac12})$ with suitable accuracy and is computed as a function of the boundary extrapolated data, i.e.
$$
\mathcal{F}_{j+\nicefrac12}(t) = \mathcal{F}(U_{j+\nicefrac12}^{+}(t),U_{j+\nicefrac12}^{-}(t))
$$
and $\mathcal{F}$ is a consistent and monotone numerical flux, evaluated on two estimates of the solution at the cell interface, i.e $U_{j+\nicefrac12}^{\pm}(t)$. We focus on the class of central schemes, in particular we consider a local Lax-Friedrichs flux.
In order to construct a third-order scheme the values $U_{j+\nicefrac12}^{\pm}(t)$ at the cell boundaries are computed with the third-order CWENO reconstruction~\cite{CPSV:cweno,LPR:00:SIAMJSciComp}.


The term $K_j(t)$ is a high-order approximation to the diffusion term in~\eqref{eq:generalPDE}. In the examples below we use the explicit fourth-order central difference employed in~\cite{KurganovLevy} for convective-diffusion equations with a general dissipation flux, and which uses point-values reconstructions computed with the CWENO polynomial.

Finally, $\mathcal{S}_j(t)$ is the numerical source term which is typically approximated as
$\mathcal{S}_j(t) = \sum_{q=0}^{N_q} \omega_q S(\mathcal{R}_j(t,x_q))$,
where $x_q$ and $\omega_q$ are the nodes and weights of a quadrature formula on $\Omega_j$. We employ three point Gaussian quadrature formula matching the order of the scheme.

System~\eqref{eq:generalPDE} is finally solved by the classical third-order (strong stability preserving) SSP Runge-Kutta with three stages~\cite{JiangShu:96}. At each Runge-Kutta stage, the cell averages are used to compute the reconstructions via the CWENO procedure and the boundary extrapolated data are fed into the local Lax-Friedrichs numerical flux. The initial data are computed with the three point Gaussian quadrature. The time step $\Delta t$ is chosen in an adaptive fashion and all the simulations are run with a CFL of $0.45$.

\subsection{Validation of the Moment Model Analysis}

In Section~\ref{Moment}, thanks to the corresponding moment model, we have extensively discussed properties of the solution of the mean-field limit of the SimResNet using a linear activation function. In this subsection we provide a numerical evidence of the theoretical findings by considering the following Gaussian probability distribution
$$
	g_0(x) = \frac{1}{\sqrt{2\pi}} \exp\left\{ -\frac{(x-1)^2}{2} \right\}
$$
as initial condition of~\eqref{eq:meanfield1D}. This represents the distribution of the given input data in a classical viewpoint of a neural network. Namely, we assume that the input measurements are distributed as a Gaussian.

\begin{figure}[t!]
\centering
\includegraphics[width=1\textwidth]{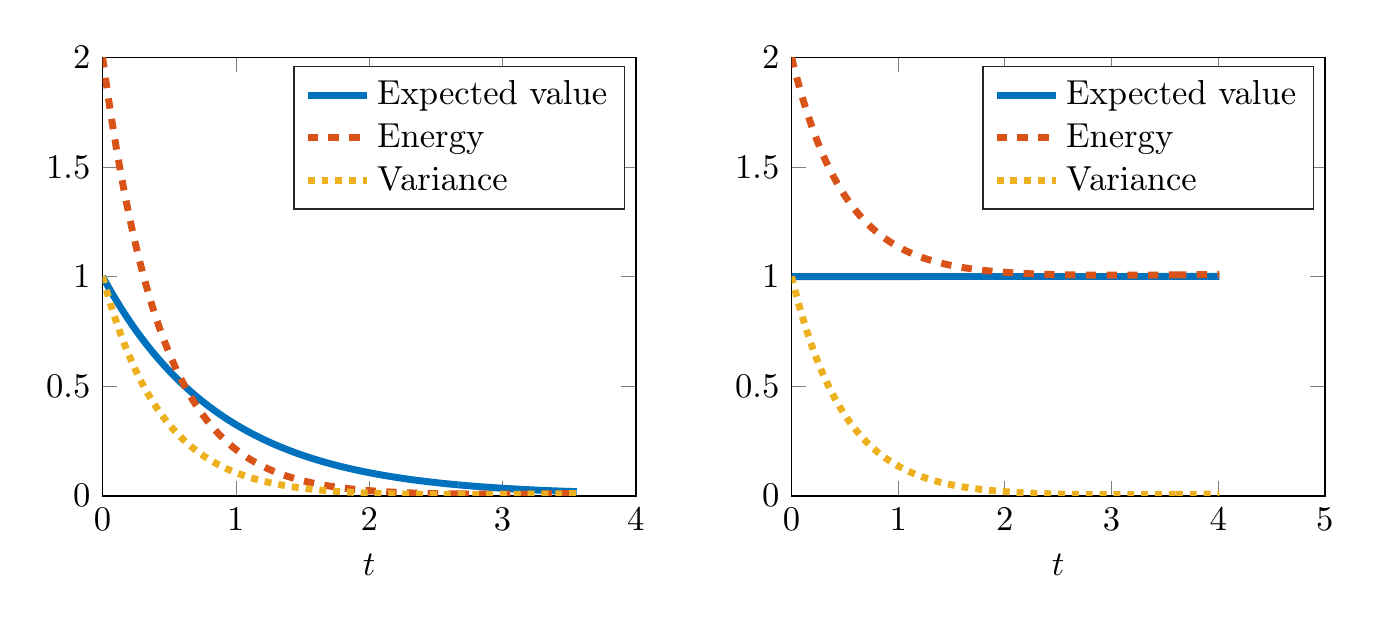}
\caption{Left: Moments of our PDE model with $\sigma(x)=x, w=-1, b=0$.
Right:  Moments of our PDE model with $\sigma(x)=x, w=-1, b=-\frac{m_1(t)}{m_0(0)}$.
}\label{clustering}
\end{figure}

\begin{figure}[t!]
\centering
\includegraphics[width=1\textwidth]{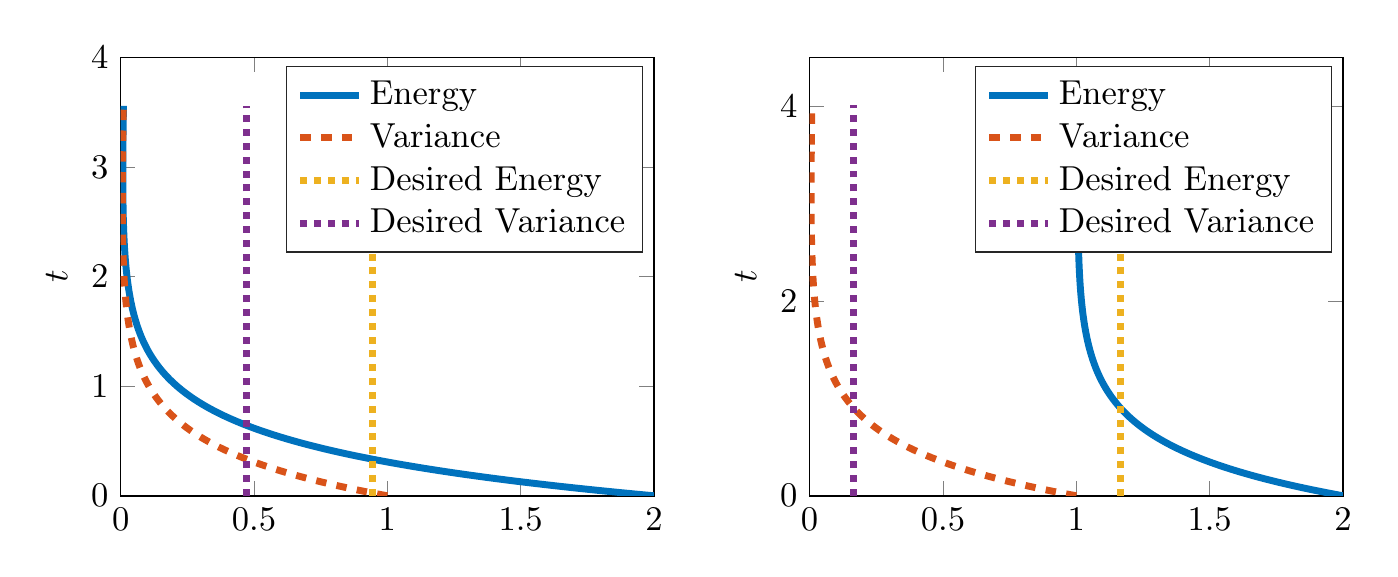}
\caption{Left: The energy and variance plotted against the desired values with $\sigma(x)=x, w=-1, b=0$.  Right: The energy and variance plotted against the desired values with $\sigma(x)=x, w=-1,  b=-\frac{m_1(t)}{m_0(0)}$.  }\label{desired}
\end{figure}

As predicted by Proposition~\ref{Prop::Steady} a steady state of the mean-field equation is a combination of Dirac delta. The number of these delta functions depends on the choice of the activation function. Proposition~\ref{prop1} and Proposition~\ref{prop2}, instead, provide conditions on the parameters to obtain a Dirac delta as output of the network when we consider $\sigma=\sigma_I$. In particular, with the choice $w=-1$ and $b=0$ a decay to zero of the energy, of the first moment and of the variance is expected. This situation is depicted in the left plot of Figure~\ref{clustering}. Instead, choosing $b=-\frac{m_1(t)}{m_0(0)}$ guarantees conservation of the first moment as shown in the right plot of Figure~\ref{clustering}.

Finally, Figure~\ref{desired} illustrates the findings of Corollary~\ref{cor2}. We compute the time needed in order to reach a fixed desired threshold of the values of the energy and of the variance. This means that, from a mean field view-point of a neural network, we can gain insights on the final time we consider in order to meet the variance of the target distribution, thus avoiding over-fitting phenomena. 

\subsection{Machine Learning Applications}

We present here the experiments based on the mean-field and on the kinetic approach, i.e.~relying on the statistical interpretation of the neural network process. We consider problems of classification and regression, typical of machine learning applications. In all these simulations the activation function is chosen to be the hyperbolic tangent, thus $\sigma(x)=\sigma_T(x)=\tanh(x)$.

\subsubsection{A Classification Problem} \label{MLApplications}
Consider a classification problem as follows. We measure a quantity (e.g.~the length of a vehicle) and we need to identify, or classify, the type of the object related to that measurement (e.g.~car or truck). Therefore, in a classification problem, the task of the neural network is to determine the type given a measurement. 
Keeping in mind the example of classifying vehicles from their length measurement, an experimental data set might look like Table~\ref{InputExample} below, with a suitable scalar label for the classifiers.

\begin{table}[!ht]
\caption{Example of a data set for a classification problem.}\label{InputExample}
\centering
\begin{tabular}[c]{|c||c|c|c|c|c|c|c}
\hline
Measurement & 3 & 3.5 & 5.5 & 7 & 4.5 & 8 & \dots \\
\hline
Classifier & car & car & truck & truck & car & truck & \dots \\
\hline
\end{tabular}
\end{table}

\begin{figure}[t!]
	\centering
		\includegraphics[width=1\textwidth]{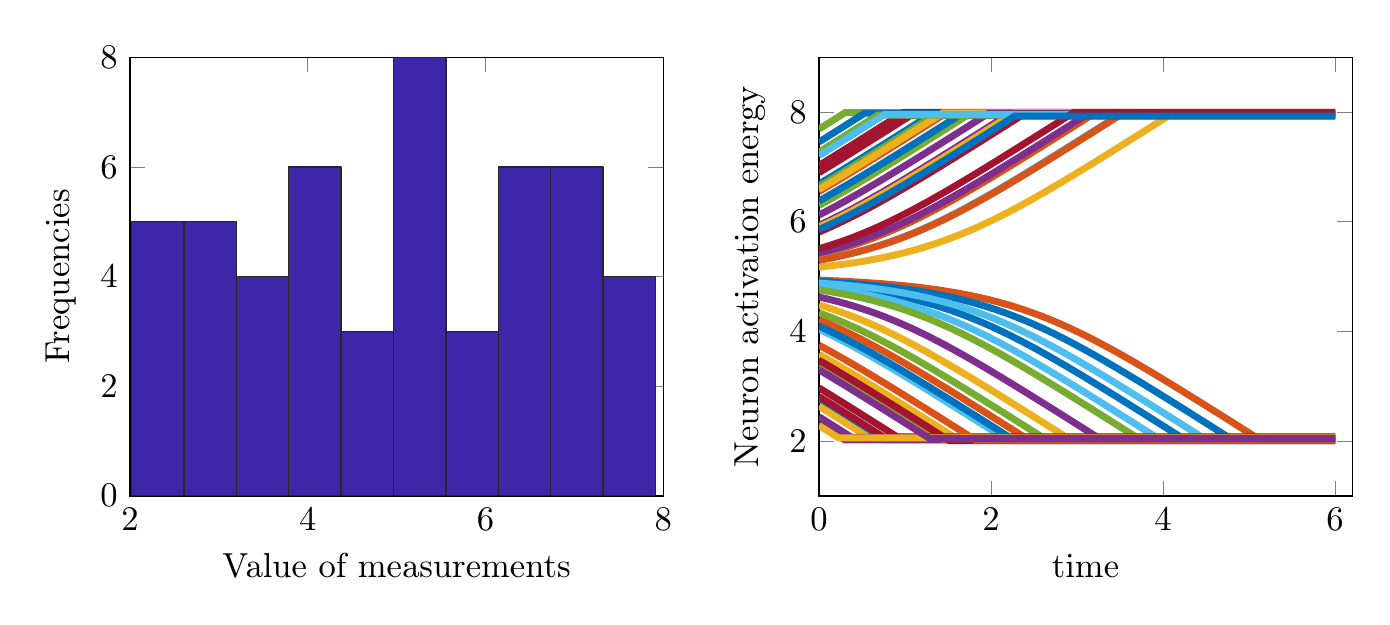}
	\caption{We consider $50$ vehicles with measured length $2$ and $8$ obtained as uniformly distributed random realizations.
		Left:  Histogram of the measured length of the vehicles. Right: Trajectories of the neuron activation energies of the $50$ measurements.
	}\label{HistRegression}
\end{figure}

\begin{figure}[t!]
	\centering
		\includegraphics[width=1\textwidth]{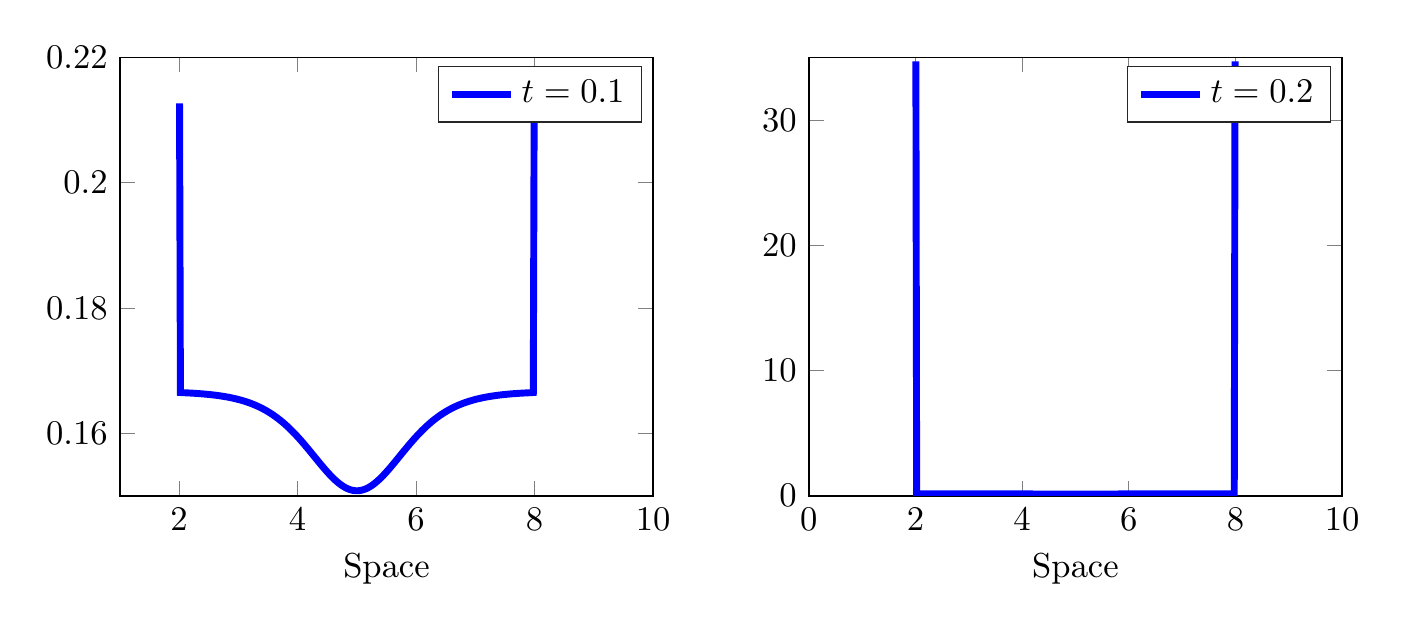}
	\caption{Solution of the mean field neural network model at different time steps. The initial value is a uniform distribution on $[2,8]$ and the weight and bias is chosen as $w=1,\ b=-5$. 
	}\label{RegressionSol}
\end{figure}

A statistical interpretation of the input data set can be obtained by considering the normalized histogram of the frequencies of the given measurements, as in the left plot of Figure~\ref{HistRegression}. Then, a continuous approximation of the histogram of the input measurements is used as initial distribution of the mean-field limit of the neural network. In this example the classifier is artificially associated to the binary values $2$, for cars, and $8$, for trucks. Thus, the output of the mean-field equation is expected to be two Dirac delta distributions located at the chosen binary values distinguishing the classifier. According to Proposition~\ref{Prop::Steady}, in order to have two Dirac delta at steady state we need to choose an activation function with at least two zeros in the computational domain. Another possibility is to use an activation function with only one zero, requiring the introduction of zero flux boundary conditions on the numerical scheme.

On the particle level, cf.~right plot of Figure~\ref{HistRegression}, we observe the convergence in time of given measurements to the expected clusters. In this case a simple evolution of the time continuous microscopic model~\eqref{eq:simplifiedNN} is employed.
The solution of the mean-field formulation of the neural network model is depicted in Figure~\ref{RegressionSol} at different times.
Certainly, as already discussed, the mean-field equation provides a good approximation of the microscopic dynamics when the number of input measurements is very large. However, due to the simplistic formulation of the classification problem we observe a good agreement of the results obtained at the discrete and at the continuous level.

\subsubsection{A Regression Problem}

We may have given measurements at fixed locations. These measurements might be disturbed possibly due to measurement errors as in the left plot of Figure~\ref{RegressionProb}.
In a regression problem the task of the neural network is to find a linear fit $y=mx+q$ of those data points, thus learning its slope $m$ and the vertical intercept $q$, i.e.~the intersection of the linear function with the $y$ axis. This type of problem can be reformulated in a two-dimensional setting, where the given measurements are used to generate an input data set of slopes and vertical intercepts, see the center and the right plots of Figure~\ref{RegressionProb}.

\begin{figure}[t!]
	\centering
	\includegraphics[width=0.32\textwidth]{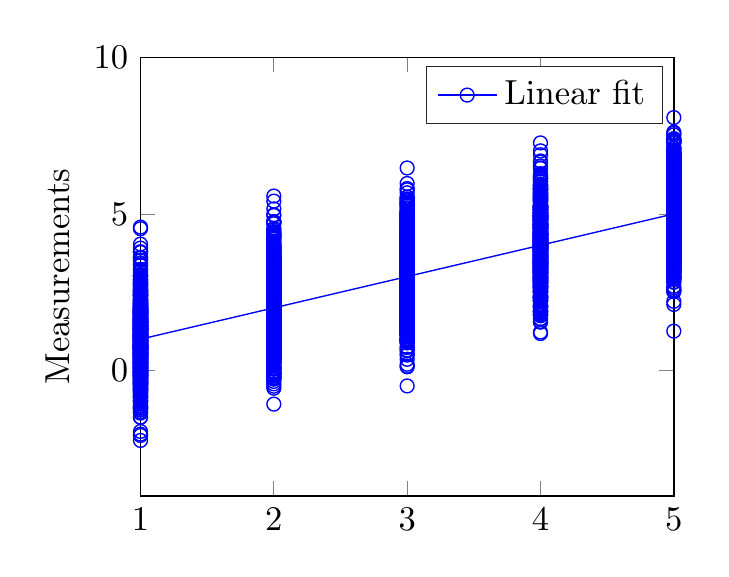}
	\includegraphics[width=0.32\textwidth]{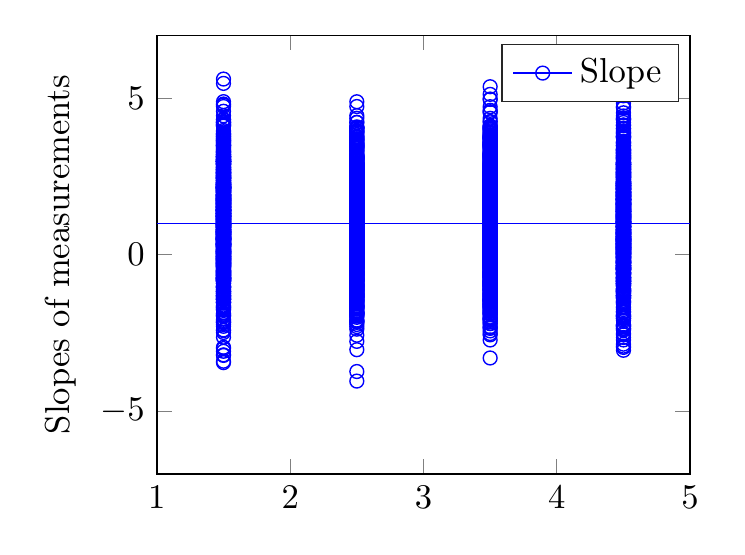}
	\includegraphics[width=0.32\textwidth]{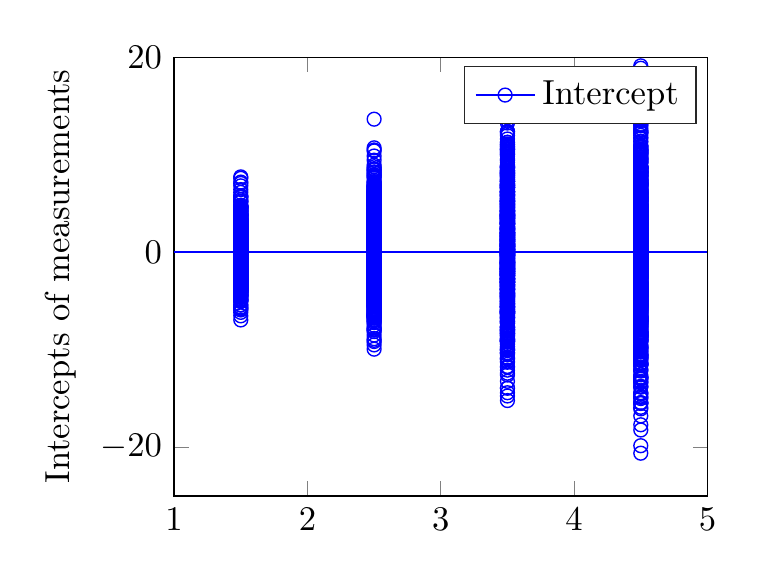}
	\caption{Left: Regression problem with $5\cdot10^3$ measurements at fixed positions around $y=x$. Measurement errors are distributed according to a standard Gaussian. Center: Numerical slopes computed out of the previous measurements. Right: Numerical intercepts computed out of the previous measurements.
	}\label{RegressionProb}
\end{figure}

\begin{figure}[t!]
	\centering
		\includegraphics[width=0.32\textwidth]{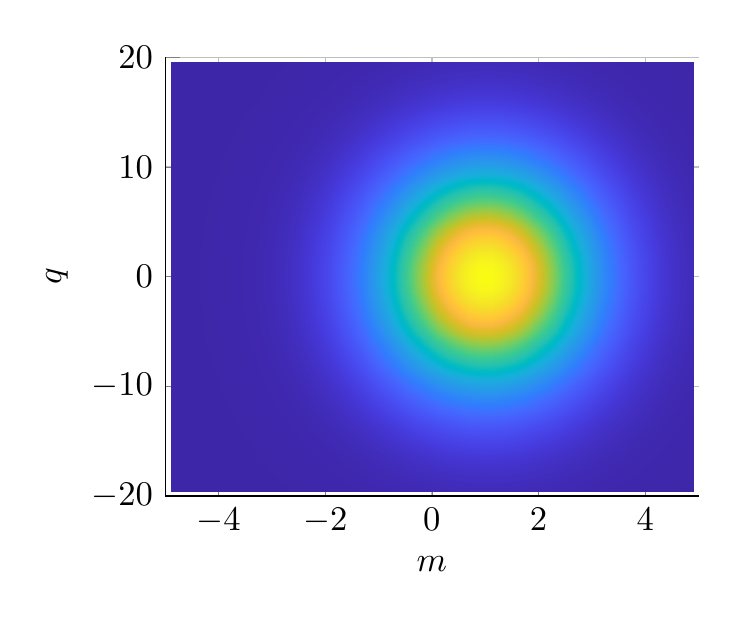}
		\includegraphics[width=0.32\textwidth]{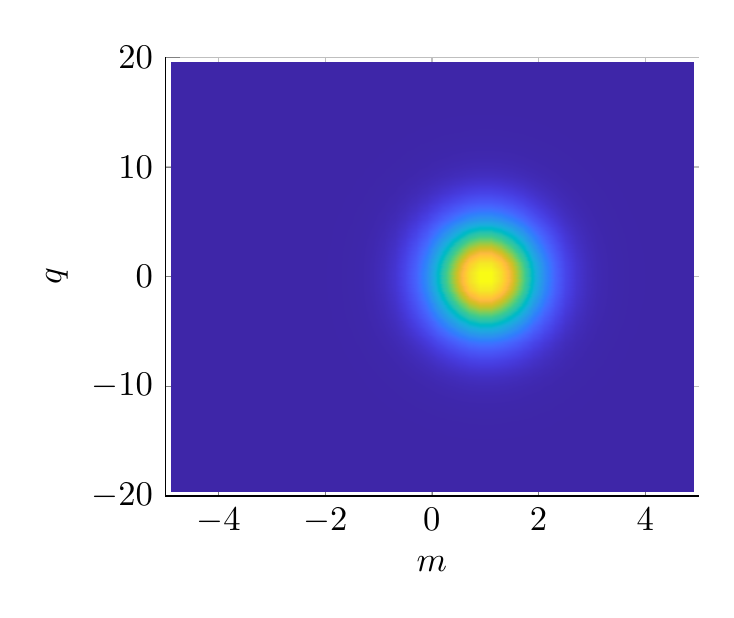}
		\includegraphics[width=0.32\textwidth]{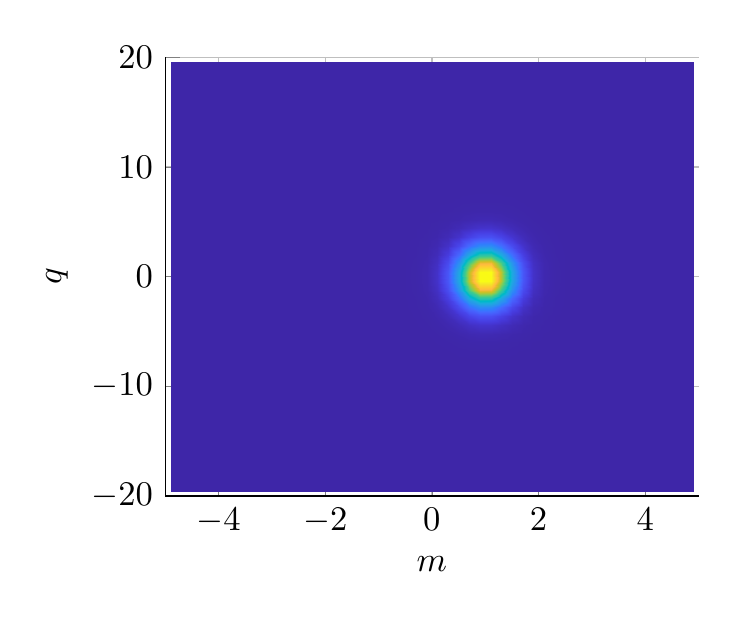}
	\caption{Evolution at time $t=0$ (left plot), $t=1$ (center plot), $t=2$ (right plot) of the mean field neural network model~\eqref{eq:2dmeanfield} for the regression problem with weights $w_{xx}=1$, $w_{xy}=w_{yx}=0$, $w_{yy}=-1$, and bias $b_x=-1$, $b_y=0$. 
	}\label{Regression}
\end{figure}

Also in this application we can build a statistical interpretation of the data set by considering the histogram of input data and its continuous approximation which is used as initial condition of the mean-field limit equation~\eqref{eq:meanfield}. We point-out that in this example we need to consider the two dimensional version of the mean-field PDE~\eqref{eq:meanfield}, namely
\begin{equation} \label{eq:2dmeanfield}
	\partial_t g(t,x,y) + \partial_x \left( \frac{w_{xx} x + w_{xy} y}{2} + b_x \right) + \partial_x \left( \frac{w_{yx} x + w_{yy} y}{2} + b_y \right) = 0.
\end{equation}
The solution of the mean field equation collapses to a Dirac delta distribution located at $(m,q)=(1,0)$, as we expect to recover from the regression problem. 
See Figure~\ref{Regression}. The parameters used to solve~\eqref{eq:2dmeanfield} are specified in the caption of Figure~\ref{Regression}. Further, we employ the identity activation function and we again stress the fact that in this choice, as well as in the choice of the parameters, we are using the theoretical findings of the moment model analysis in Section~\ref{Moment}.

\subsection{Experiment on the Forward Training of Weights and Bias}

We aim to present the benefit of the sensitivity analysis in Section~\ref{sensitivity} which has lead to a forward algorithm for the updates of the weights and the bias. To this end, we consider again a regression problem, with the same idea presented in the previous section. However, for simplicity and without loss of generality, we restrict the regression example to the one dimensional setting in which the task is learning the slope of the linear fit.
	
\begin{figure}[t!]
	\centering
	\includegraphics[width=\textwidth]{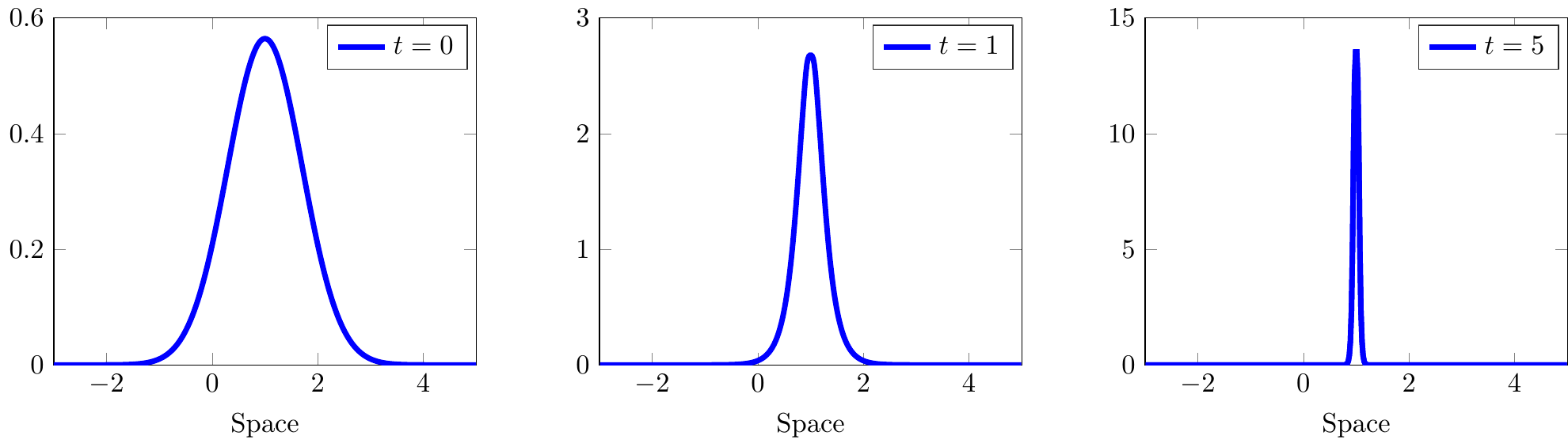}
	\caption{Evolution at time $t=0$ (left plot), $t=1$ (center plot), $t=5$ (right plot) of the one dimensional mean field neural network model for the regression problem with weight $w=1$ and bias $b=-1$. 
	}\label{Regression1D}
\end{figure}	

The distribution of the input data is assumed to be a Gaussian with mean and variance equal to one, and the target distribution is a Dirac delta centered at location $x=1$. The parameters of the mean field PDE, i.e.~the weights and bias, are $w=1$ and $b=-1$, respectively, as in the previous section. Therefore, the evolution of the one dimensional mean field equation can be observed in Figure~\ref{Regression} along the $m$ direction, and depicted in Figure~\ref{Regression1D} for the sake of readability.

\begin{figure}[t!]
    \centering
		\includegraphics[width=1\textwidth]{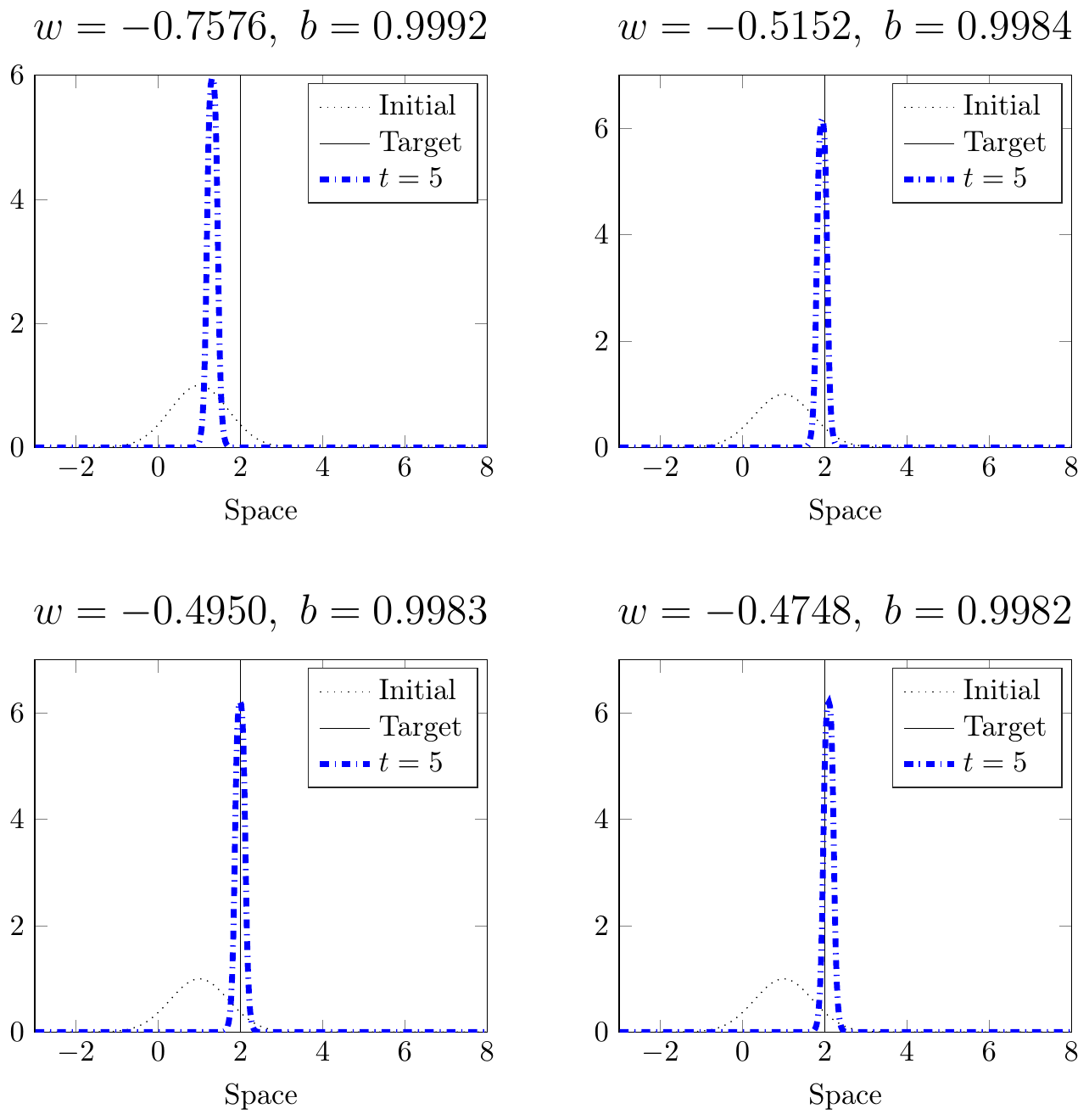}
	\caption{Results of the mean field neural network model with updated weights and bias in the case of a novel target. 
	}\label{GradUp}
\end{figure}

As Figure~\ref{Regression} shows, the solution at time $t=5$, i.e.~$g(t=5,x)$, is close to the target which is a Dirac delta centered at $x=1$. In order to investigate the application of the forward algorithm for the update of weight and bias, we consider a new target distribution, namely a Dirac delta centered at $x=2$. In a classical situation, we would need to re-train the network to compute new optimal parameters. Here, we use Algorithm~\ref{alg:sensitivity}, arising from the adjoint calculus in Section~\ref{sensitivity}, with a fixed step size $\gamma=2$ in order to update the weight and the bias. In Figure~\ref{GradUp} we show the solution of the mean field neural network model at $t=5$ for different number of gradient steps. At each of these steps we get a weight and bias, which is indicated in the title of each plot. After three step of the forward algorithm we obtain new parameters which are able to drive the solution of the mean field equation towards the new target, even starting from the same input data. This example shows how the forward algorithm can be used in order to update weights and bias in case of perturbations in the initial input distribution or in the target distribution.

\subsection{The Fokker-Planck Equation}

In this section we aim to validate the results of Section~\ref{FPsteadystate} on a typical example. In particular, the goal is to show that it is possible to fit a standard normal Gaussian distribution, which is taken as target distribution, under the conditions on parameters, activation and diffusion function stated in~Section~\ref{FPsteadystate}. Here, we consider uniform distribution on $[-1,-\frac12]$ as initial condition of the Fokker-Planck model~\eqref{eq:fokkerplanck} and evolve it in time. As presented in Section~\ref{FPsteadystate} the Fokker-Planck type interpretation of the neural network is able to fit the Gaussian distribution in the steady state if
we choose the identity as activation function, any $w^\infty<0$, $b^\infty=0$ and $K(x)=1$. This approach allows us to drive any initial input to the given target under this choices.

Recall that, on the contrary, and as proven in Section~\ref{Moment}, the mean field neural network can perform in the case of a hyperbolic tangent or identity activation function only clustering tasks if the conditions $w^\infty<0$, $b^\infty=0$ are verified.
This means that for large times the distribution approaches a Dirac delta distribution and consequently it is not possible to fit a Gaussian distributed target by using the deterministic SimResNet model. In this perspective, stochastic neural network with stochastic output layers, resulting in a Fokker-Planck kinetic interpretation, allows to fit more general target distributions under the same conditions.



\begin{figure}[t!]
	\centering
		\includegraphics[width=1\textwidth]{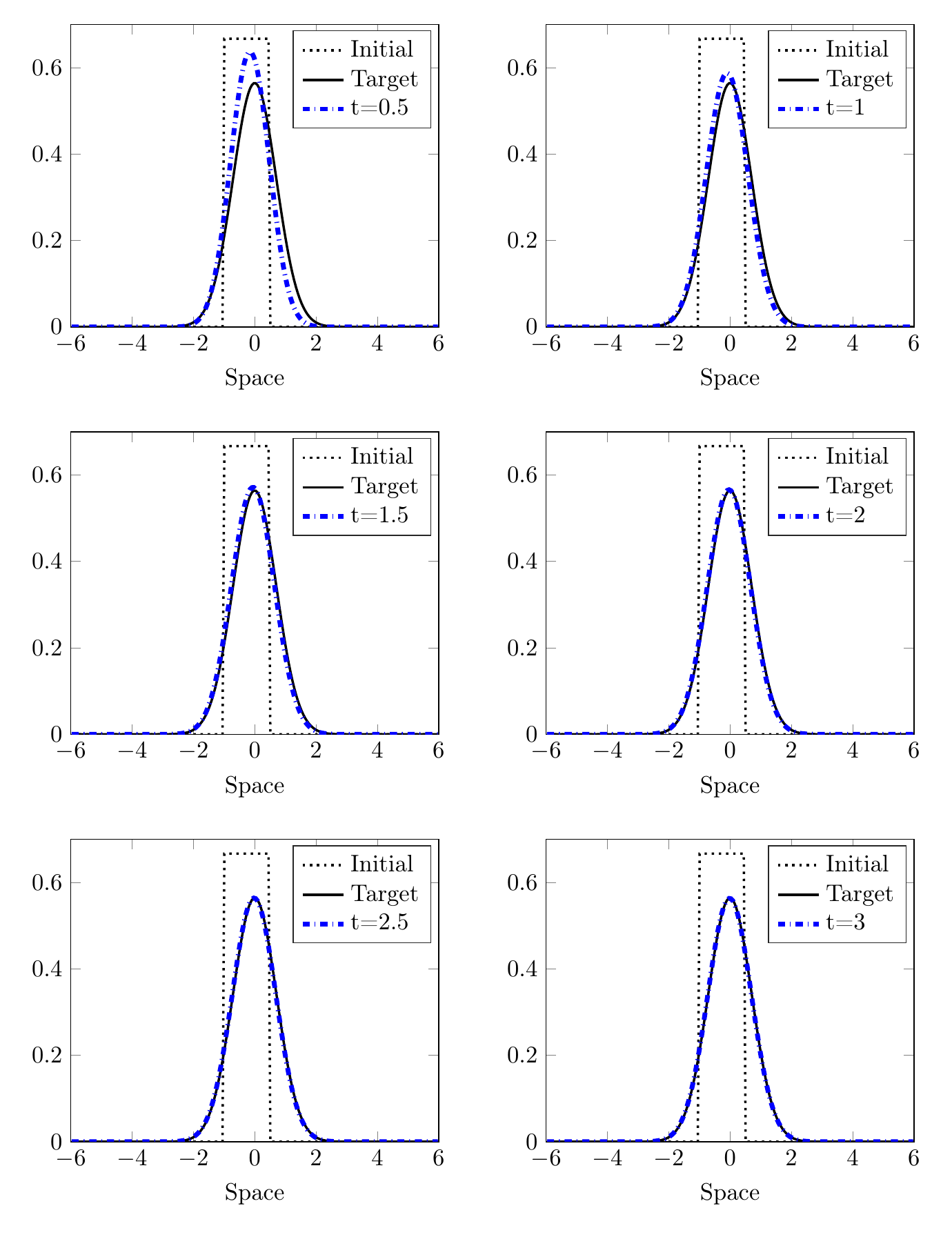}
	\caption{Solution of the Fokker-Planck neural network model at different times. Here, we have chosen the identity as activation function with weight $w=-1$, bias $b=0$ and diffusion function $K(x)=1$. }\label{SolFP}
\end{figure}


The solution of the Fokker-Planck neural network model for different time steps is presented in Figure \ref{SolFP} showing the convergence to the given target. 


\section{Conclusion}

Starting from the classical formulation of a residual neural network, we have considered a simplified structure of it, which consists in the assumption that each layer has the same number of neurons. This number is fixed by the size of the input signal. The effectiveness of a network with this structure has been demonstrated in previous works~\cite{GebhardtTrimborn,Bobzin2021}. Moreover, assuming this structure underlies the derivation of neural differential equations. In fact, by interpreting the discrete structure provided by the layers as a fictitious time discretization, we have derived the corresponding time continuous limit which leads to a differential formulation of the process of a network. This, in turn, has lead to the computation of the mean-field limit in the number of measurements. We have switched from a microscopic perspective on the level of time and input data to a statistical interpretation provided by the mean-field limit.

The resulting mean-field equation has been analyzed in terms of existence of solutions and characterization of steady states. Furthermore, under some assumptions, as considering an identity activation function, we have analyzed moment model properties. We have investigated the sensitivity of the loss function with respect to the weights and the bias bias which has allowed to determine a forward algorithm for the parameter update when a change of the input and of the target distribution is introduced, avoiding re-training of the network via back-propagation algorithms.
Finally, we have derived a Boltzmann description of the simplified residual neural network and extended it to the case of a noisy setting, motivated by stochastic residual neural networks with stochastic layers. As consequence, non trivial steady states have been obtained for the limiting Fokker-Planck type model. In the last section we have validated our analysis and have presented simple machine learning applications, based on regression and classification problems.




\subsection*{Acknowledgments}
The authors thank the Deutsche Forschungsgemeinschaft (DFG, German Research Foundation) for the financial support through 20021702/GRK2326, 333849990/IRTG-2379,
HE5386/15,18-1,19-1,22-1,23-1 and under Germany’s Excellence Strategy EXC-2023 Internet
of Production 390621612. The funding through HIDSS-004 is acknowledged.

M. Herty and T. Trimborn acknowledge the support by the ERS Prep Fund - Simulation and Data Science. The work was partially funded by the Excellence Initiative of the German federal and state governments.

%
%
%
%
%
%
%
%
%

	\bibliography{literaturmean.bib}
		\bibliographystyle{abbrv}

\end{document}